\documentclass[12pt]{amsart}
\usepackage{amsthm}
\usepackage{amsmath}
\usepackage{amssymb}
\usepackage{comment}
\usepackage{enumitem}
\usepackage{amsfonts}
\usepackage{mathtools}
\usepackage{setspace}
\usepackage[dvipsnames]{xcolor}
\usepackage[hidelinks]{hyperref}
\usepackage{dynkin-diagrams}
\hypersetup{
    colorlinks=true,
    linkcolor=blue,
    urlcolor=red,
    citecolor=Green
    }
\usepackage{subfigure}

\usepackage[utf8]{inputenc}

\advance\paperheight1.00cm
\advance\textheight1.00cm
\advance\vsize1.00cm
\advance\topskip-0.5cm
\advance\voffset-0.5cm

\usepackage{tikz,tikz-cd}
\usepackage{todonotes}
\DeclareTextFontCommand{\emph}{\color{blue}\em}

\newcommand{\precdot}{\mathrel{\ooalign{$\prec$\cr
  \hidewidth\raise0.001ex\hbox{$\cdot\mkern0.6mu$}\cr}}}

\newtheorem{theorem}{Theorem}[section]
\newtheorem{proposition}[theorem]{Proposition}
\newtheorem{conjecture}[theorem]{Conjecture}

\newtheorem{lemma}[theorem]{Lemma}
\newtheorem{corollary}[theorem]{Corollary}
\newtheorem{problem}[theorem]{Problem}

\theoremstyle{definition}
\newtheorem{ex}[theorem]{Example}
\newtheorem{definition}[theorem]{Definition}

\theoremstyle{remark}
\newtheorem*{remark}{Remark}

\newcommand{\R}{\mathbb{R}}
\DeclareMathOperator{\VF}{VF}
\DeclareMathOperator{\At}{At}
\DeclareMathOperator{\Co}{Co}

\title{Lattice Bruhat intervals are polytopal}
\author{Yibo Gao}
\address{Beijing International Center for Mathematical Research, Peking University, Beijing 100871, China}
\email{gaoyibo@bicmr.pku.edu.cn}
\author{Tao Gui}
\address{Institute for Theoretical Sciences, Westlake University, No.\ 600 Dunyu Road, Xihu District, Hangzhou, Zhejiang Province 310030, China}
\email{guitao18@mails.ucas.ac.cn}

\keywords{Lattice Bruhat intervals, polytopal realizations, Kazhdan--Lusztig polynomials, toric $g$-polynomials, Richardson varieties}
\subjclass[2020]{20F55, 52B05, 14M15, 14M25}
\date{}

\begin{document}
\pagestyle{plain}
\begin{abstract}
For a Bruhat interval $[x,y]$ in an arbitrary Coxeter group which is a lattice, we give a type-uniform construction of a convex polytope whose complete face lattice realizes $[x,y]$. This gives a polytopal realization of the coincidence between classical Kazhdan--Lusztig polynomials for lattice Bruhat intervals and Stanley's toric $g$-polynomials. Consequently, the Kazhdan--Lusztig polynomial of a lattice Bruhat interval cannot have internal zeros, satisfies a multiplicative strengthening of monotonicity, and obeys the generalized upper bound theorem for polytopes. For lattice Bruhat intervals in finite Weyl groups, we explain the relation with the toric geometry of Richardson varieties and derive formulas for their local and global intersection cohomology, as well as a Boolean criterion for rational smoothness. We conjecture that the coefficients of these Kazhdan--Lusztig polynomials form an $M$-sequence.
\end{abstract}
\maketitle

\section{Introduction}
In 1979 Kazhdan and Lusztig introduced the remarkable polynomials $P_{x,y}(q)$ for comparable elements of a Coxeter group \cite{KL1979}; for Weyl groups, their coefficients encode the dimensions of local intersection cohomology groups of Schubert varieties \cite{KL1980}. The purely combinatorial definition of these polynomials inspired Stanley to introduce toric $g$-polynomials for convex polytopes \cite{Stanley1987}, which have a similar interpretation in the intersection cohomology of toric varieties when the polytopes are rational \cite{DenefLoeser1991,Fieseler1991}. The marvelous positivity theorems of Elias--Williamson \cite{EliasWilliamson2014} and Karu \cite{Karu2004} extend the positivity of the coefficients of the Kazhdan--Lusztig polynomials and toric $g$-polynomials in the geometric settings to arbitrary Coxeter groups and arbitrary convex polytopes, respectively. 

Nevertheless, the two families of polynomials have quite different numerical behavior. A famous theorem of Polo \cite{Polo1999} states that any polynomial in $1+q\mathbb Z_{\geq0}[q]$ appears as a Kazhdan--Lusztig polynomial in some symmetric group. In contrast, toric $g$-polynomials of polytopes satisfy strong restrictions: they cannot have internal zeros and are conjectured to have coefficients forming an $M$-sequence (see \cite[Theorem 1.4 and Section 1.2]{Braden2006}).

The Kazhdan--Lusztig--Stanley theory provides a unified framework for these constructions \cite{Stanley1992}. Its input is a ranked poset $P$ together with an auxiliary element (known as a $P$-kernel) in its incidence algebra.  Classical Kazhdan--Lusztig polynomials arise from the Kazhdan--Lusztig $R$-polynomial kernel of the Bruhat intervals in Coxeter groups, while toric $g$-polynomials arise from the Eulerian kernel $(q-1)^{\operatorname{rk}(a,b)}$ on face posets of polytopes (see \cite{Stanley1992,Proudfoot2018}). Note that the face poset of any convex polytope is a lattice. It is known that if a Bruhat interval is a lattice, these two kernels agree on every subinterval. Hence the Kazhdan--Lusztig polynomials in this case coincide with toric $g$-polynomials \cite{Brenti-combinatorial}. Our main result shows that this agreement comes from an actual convex polytope, building a bridge for the two settings of the Kazhdan--Lusztig--Stanley theory. 

\begin{theorem}\label{thm:main}
Let $(W,S)$ be any Coxeter system and let $x<y$ in the Bruhat order. If the Bruhat interval $[x,y]$ is a lattice, then there is a convex polytope $L_{x,y}$ of dimension $\ell(x,y)-1$ whose complete face lattice is isomorphic to $[x,y]$.
\end{theorem}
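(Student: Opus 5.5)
Lattice Bruhat intervals have a known structure: if $[x,y]$ is a lattice, then every subinterval is a lattice. Known results (Tsukerman–Williams, Gaetz–Gao on lattice Bruhat intervals) suggest the following picture. Let $T$ be the set of reflections and $\At = \{ t\in T : x < xt \le y\}$ the atoms $xt$. Then $[x,y]$ is atomic and coatomic. Each element $w$ is the join of the atoms below it, and the map $w \mapsto \{t : xt \le w\}$ is injective.

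The plan is to realize $[x,y]$ as the face lattice of a convex hull of vectors built from the root system. Take the geometric representation $V$ of $W$ with simple roots $\alpha_s$, and let $\beta_t$ be the positive root of the reflection $t$. For $w\in[x,y]$, consider the point $p_w = x^{-1}w\,\rho' - \rho'$, where $\rho'$ is a generic vector in the fundamental chamber. For $w$ covering $x$ with $w=xt$, we have $p_w = -\langle \rho',\beta_t^\vee\rangle\beta_t$, a positive multiple of $-\beta_t$. Alternatively, and more simply, I would define the polytope as
\[L_{x,y}=\operatorname{conv}\{\beta_t : t\in \At\}\]
in an affine hyperplane. Its dimension should be $\ell(x,y)-1$, since by Dyer's theory the reflection subgroup generated by $\At$ is a dihedral-like Coxeter system of rank $\ell(x,y)$ in a lattice interval, with $[x,y]$ isomorphic to a Bruhat interval there.

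The key steps are as follows.

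First, I would prove, using the lattice hypothesis, that the roots $\beta_t$ for $t\in\At$ span a space of dimension $\ell(x,y)$ and lie on a common hyperplane $\langle \lambda,\cdot\rangle = 1$ for a suitable $\lambda$ (for instance $\lambda = x\rho$ with normalization adjusted). This gives a polytope of dimension $\ell(x,y)-1$.

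Second, I would prove that for each $w\in[x,y]$ the set $F_w=\operatorname{conv}\{\beta_t : xt\le w\}$ is a face. To do this I would exhibit a linear functional: take $\mu_w = w\rho - x\rho$, or an analogue. By Dyer's characterization of Bruhat order via reflection cocycles, $\langle\mu_w,\beta_t\rangle$ vanishes exactly when $xt\le w$ and is positive otherwise, which cuts out $F_w$. Here the lattice property would be used to guarantee the sign pattern.

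Third, I would show every face arises this way. Given a face $F$ with vertex set $A\subseteq\At$, set $w=\bigvee A$. Then $F_w\supseteq F$, and equality follows because the face is determined by its vertex set and the smallest face containing $A$ equals $F_w$ by atomicity. The map $w\mapsto F_w$ is order-preserving with order-preserving inverse. Together with $F_x=\emptyset$ and $F_y=L_{x,y}$, this gives the isomorphism.

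The main obstacle is the second step: producing a functional whose zero set on $\At$ is exactly $\{t : xt\le w\}$. In general, the reflections below $w$ need not be separated linearly from the rest, and this is precisely where the lattice hypothesis must enter. I would first attempt this by induction on $\ell(x,y)$, using coatoms. Every $w$ is a meet of coatoms, so it suffices to handle facets, because faces of a polytope are intersections of facets and meets in $[x,y]$ correspond to intersections. For a coatom $c$, I need a hyperplane through exactly the atoms below $c$. I would try to obtain this from the fact that $[x,c]$ is a lattice of rank one less, so by induction its atoms span a codimension-one subspace. I would then check, via a positivity argument in the reflection subgroup, that the remaining atoms lie strictly on one side.
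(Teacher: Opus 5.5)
\textbf{The construction is in a space that is too small.} Step 1 is false. Both of your candidate constructions ($p_w=x^{-1}w\rho'-\rho'$ and $\operatorname{conv}\{\beta_t\}$) live in $\mathfrak h^*$. So the resulting polytope has dimension at most $\dim\mathfrak h-1$, which is $|S|-1$ for a finite Weyl group. But the rank of a lattice Bruhat interval can far exceed $|S|$: the Boolean intervals of Ellenberg et al.\ in $S_{2^n}$ have rank $n2^{n-1}$. This is the ``unexpected torus'' phenomenon from the introduction.

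The paper's own example already breaks your plan. The interval $[1324,4231]\subset S_4$ is Boolean of rank $4$, with atoms $x(34)$, $x(24)$, $x(13)$, $x(12)$. Your points are therefore $e_3-e_4$, $e_2-e_4$, $e_1-e_3$, $e_1-e_2$; the left labels $x\beta_t$ give the same four vectors. Since $(e_1-e_2)+(e_2-e_4)=(e_1-e_3)+(e_3-e_4)$, their convex hull is a parallelogram, not a $3$-simplex.

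Step 2 fails on the same example. For the coatom $c=3241$, the atoms below $c$ have roots $e_1-e_2$, $e_1-e_3$, $e_3-e_4$, which span all of $\mathfrak h^*$. So no nonzero element of $\mathfrak h$, whether $\mu_w=w\rho-x\rho$ or anything else, vanishes exactly on them. The induction sketched in your last paragraph breaks for the same reason. Your claim that the atom reflections generate a reflection subgroup of rank $\ell(x,y)$ is also false here: $(12),(13),(24),(34)$ generate all of $S_4$, which has rank $3$.

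\textbf{What is missing.} You need to replace $\mathfrak h^*$ by a space of the correct dimension. The paper uses the space $V_{x,y}$ of degree-two moment-graph sections: maps $\varphi:[x,y]\to\mathfrak h^*$, modulo constants, with $\varphi(wt)-\varphi(w)\in\R\alpha_t$ along Bruhat edges.
\begin{itemize}
\item By Dyer, for a lattice interval $\dim V_{x,y}=-[q^{\ell(x,y)-1}]R_{x,y}(q)=\ell(x,y)$, and the edge functionals along any maximal chain form a basis.
\item The vertices of the polytope are the edge-coefficient functionals $\lambda_a\in V_{x,y}^*$. Up to root/coroot and left/right conventions, your labels are their images under the dual of $\zeta\mapsto(w\mapsto -w^{-1}\zeta)$, which sends $\lambda_a$ to $x\alpha_t^\vee$. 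This map is injective when $\ell(x,y)\le 3$ but not in general, and that non-injectivity is exactly what collapses your polytope.
\item In $V_{x,y}$ the facet functional $f^c$ can simply be prescribed: zero on the edges of a chain from $x$ to $c$, and $1$ on the edge $c\lessdot y$.
\end{itemize}
The real work is then the positivity $\lambda_a(f^c)>0$ for $a\nleq c$. The paper proves this by induction on rank, using two ingredients:
\begin{itemize}
\item a rank-two lemma, $\lambda_{u,b}=\kappa\lambda_{u,a}+\tau\lambda_{a,v}$ with $\tau>0$, proved via reflection orders;
\item connectivity of sublevel sets in the atom graph, proved via descent flips and Newman's lemma.
\end{itemize}
Your ``positivity argument in the reflection subgroup'' leaves all of this unspecified.

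Finally, your Step 3 is only asserted. Atomicity of $[x,y]$ does not by itself imply that a face containing some atoms contains every atom below their join. The paper obtains this from two facts: the dimension count $\dim K_z=\ell(x,z)$, and the diamond-flip connectivity of maximal chains in a face lattice, combined with every rank-two Bruhat interval being a diamond.
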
 

Here the complete face lattice includes both the empty face and the whole polytope. They correspond to $x$ and $y$, respectively, under the isomorphism. This uniform construction will be provided in Section~\ref{sec:polytope}.

We explain some related geometry. Suppose that $W$ is a finite Weyl group of a complex semisimple algebraic group $G$ and that the Bruhat interval $[x,y]\subset W$ is a lattice. In this case, Theorem~\ref{thm:main} was recently proved by Bossinger--Gorsky--Simental \cite[Theorem~1.8 and Corollary~6.8]{BossingerGorskySimental2026} for finite Weyl groups based on earlier work of Gorsky--Kim--Sherman-Bennett in type $A$ \cite{GorskyKimShermanBennett} using toric Richardson varieties. 

More precisely, let $\overline{R}_{x,y}$ be the closed Richardson variety in the complete flag variety, and let $R_{x,y}$ be its open Richardson stratum. For finite Weyl groups, the lattice condition is equivalent to $R_{x,y}$ being an algebraic torus and to $\overline{R}_{x,y}$ being toric (see Proposition~\ref{prop:toric-Richardson}). The acting torus may be unexpected, since it need not be a subtorus of the maximal torus $T$ of the algebraic group $G$ and its rank can be significantly larger than that of the maximal torus of $G$ (see \cite[Section 5.3]{GorskyKimShermanBennett}). The faces of the moment polytope $Q_{x,y}$ of $\overline{R}_{x,y}$ are indexed by subintervals of $[x,y]$, and a vertex figure of $Q_{x,y}$ at the vertex indexed by $[x,x]$ then has complete face lattice isomorphic to $[x,y]$ \cite[Corollary~6.8]{BossingerGorskySimental2026}. In other words,  in the geometric setting, a rational polytopal realization of $[x,y]$ is obtained as the moment polytope of a projective toric variety $X_{x,y}$, which is a torus-invariant divisor in a toric modification of the closed Richardson variety $\overline{R}_{x,y}$. When $\overline{R}_{x, y}$ is toric and the maximal torus $T$ has a dense orbit on $\overline{R}_{x, y}$, the moment polytope $Q_{x,y}$ can be realized as a Bruhat interval polytope defined by Kodama--Williams \cite{KodamaWilliams2014} and studied by Tsukerman--Williams \cite{TsukermanWilliams2015}. In type A, Lee--Masuda--Park \cite[Theorem~5.1]{LeeMasudaPark2021} showed that a Bruhat interval polytope is toric (that is, its dimension is equal to the length of the Bruhat interval) precisely when every subinterval is realized as a face. In particular, in this case its face lattice is the interval poset of the underlying Bruhat interval.

This picture allows us to express the local and global intersection cohomology of toric $\overline{R}_{x,y}$ in terms of the toric Kazhdan--Lusztig--Stanley polynomials of the interval poset $\operatorname{Int}([x,y])$ (see Theorem~\ref{thm:Richardson-IH}). We also give several equivalent conditions for the rational smoothness of the closed Richardson variety $\overline{R}_{x,y}$ (see Theorem~\ref{thm:rational-smoothness}). In particular, we show that for a lattice Bruhat interval in a finite Weyl group,
\[
\overline{R}_{x,y}\text{ is rationally smooth} \quad\Longleftrightarrow\quad P_{x,y}(q)=1 \quad\Longleftrightarrow\quad [x,y]\text{ is Boolean}.
\]
In this case, the moment polytope $Q_{x,y}$ of the closed Richardson variety $\overline{R}_{x,y}$ is combinatorially a cube and the ordinary cohomology Poincar\'e polynomial of $\overline{R}_{x,y}$ is $(1+q)^{\ell(x,y)}$. See Section~\ref{sec:geometry} for precise statements and proofs. 

In the geometric setting, we can interpret the classical Kazhdan--Lusztig polynomial as a local intersection cohomology Poincar\'e polynomial of a toric variety (see Corollary~\ref{cor:local-toric-KL} and the following paragraph).

We now return to arbitrary Coxeter groups. Write $g(P,q)$ for Stanley's toric $g$-polynomial of a polytope $P$, with the convention extending the $g$-vector of a simplicial polytope. Then for the polytope $L_{x,y}$ in Theorem~\ref{thm:main}, we have 
\begin{equation*}
 P_{x,y}(q)=g(L_{x,y}^{*},q),
\end{equation*}
where $L_{x,y}^{*}$ denotes the polar dual after translating the origin to the interior of $L_{x,y}$. We have the following theorem.
\begin{theorem}\label{thm:KL-applications}
Let $[x,y]$ be a lattice Bruhat interval in any Coxeter group. Let $n=\ell(x,y)$ and let $m$ be the number of coatoms in $[x,y]$. Then the following statements hold. 

\begin{enumerate}
\item The coefficients of $P_{x,y}(q)$ have no internal zeros. In other words, 
$$
[q^i]P_{x,y}(q)=0  \implies [q^{i+1}]P_{x,y}(q)=0,
$$
where $[q^i]P_{x,y}(q)$ denotes the coefficient of $q^i$ in $P_{x,y}(q)$. 
\item For every $x\leq z\leq y$,
\begin{equation*}
 P_{x,y}(q)\geq P_{x,z}(q)P_{z,y}(q)
\end{equation*}
coefficient-wise.
\item For $1\leq i\leq\lfloor(n-1)/2\rfloor$, 
\begin{equation*}
 [q^i]P_{x,y}(q)\leq\binom{m-n+i-1}{i}.
\end{equation*}
\item The equality $P_{x,y}(q)=1$ holds if and only if $[x,y]$ is Boolean.
\end{enumerate}
\end{theorem}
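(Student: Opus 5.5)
The plan is to transfer each statement to known results about toric $g$-polynomials via Theorem~\ref{thm:main} and the identity $P_{x,y}(q)=g(L_{x,y}^{*},q)$. Set $Q=L_{x,y}^{*}$. It is a polytope of dimension $d=n-1$, and its face lattice is the dual of $[x,y]$. Under this duality, facets of $Q$ correspond to vertices of $L_{x,y}$, i.e.\ to the atoms of $[x,y]$, and vertices of $Q$ correspond to facets of $L_{x,y}$, i.e.\ to the coatoms of $[x,y]$. So $Q$ has exactly $m$ vertices. Before invoking anything, I will fix the indexing convention so that $g(Q,q)=\sum_i g_i q^i$, with $g_0=1$ and $g_1=f_0(Q)-d-1$, agrees with the convention in the statement.

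For (1), I will cite Braden's theorem \cite[Theorem~1.4]{Braden2006}: the toric $g$-vector of any polytope has no internal zeros. This is a direct consequence of Karu's hard Lefschetz theorem for combinatorial intersection cohomology. For (3), I will use the generalized upper bound theorem for arbitrary polytopes, which Braden also derives from Karu's work. It says that for $1\le i\le \lfloor d/2\rfloor$ one has $g_i(Q)\le \binom{f_0(Q)-d-2+i}{i}$. Substituting $d=n-1$ and $f_0=m$ gives $\binom{m-n+i-1}{i}$ for $i\le \lfloor (n-1)/2\rfloor$, which is the claimed bound. I will check these index conventions carefully, since off-by-one errors are the main risk here.

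For (2), I need the monotonicity of $g$ under taking faces in its multiplicative form, due to Braden--MacPherson and in general to Braden's Kähler package for fans. For a face $F$ of $Q$, it reads $g(Q)\ge g(F)\,g(Q/F)$ coefficientwise, where $Q/F$ is the face figure. The cleaner route is to stay within Kazhdan--Lusztig--Stanley language on the lattice $[x,y]$ itself, using $P_{u,v}=g$ of the dual of the subinterval $[u,v]$. Every subinterval $[x,z]$ and $[z,y]$ is again a lattice Bruhat interval, and by Theorem~\ref{thm:main} it is realized by a face and a face figure: $[x,z]$ is the face lattice of the face of $L_{x,y}$ indexed by $z$, and $[z,y]$ is the face lattice of the quotient $L_{x,y}/F_z$. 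Under polarity these become a face figure and a face of $Q$. Since $P_{x,z}$ and $P_{z,y}$ are the toric $g$-polynomials of these, the inequality follows from Braden's theorem. The edge cases $z=x$ and $z=y$ are trivial. The main obstacle is matching which of the two pieces is the face and which is the link under dualization. Because the inequality is symmetric in the two factors, this matching does not affect the conclusion.

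For (4): if $[x,y]$ is Boolean, then it is the face lattice of a simplex (including the empty face), and $g$ of a simplex is $1$. Conversely, suppose $P_{x,y}=1$. Applying (2) with $z$ ranging over $[x,y]$ gives $P_{x,z}=1$ for all $z$. Applying it again to the subintervals, $g$ equals $1$ for every face of $Q$. A polytope with $g_1=0$ is a simplex, so $Q$ is a simplex, and hence $[x,y]$ is Boolean. Indeed $g_1=m-n=0$ already forces $Q$ to have $d+1$ vertices, so $Q$ is a simplex, and the face lattice of a simplex is Boolean. This uses only $g_1$ and no induction, provided $n\ge 2$. The case $n=1$ is trivial, since then $[x,y]$ is a two-element chain, which is Boolean.
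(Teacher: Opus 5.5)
Your proposal is correct and follows essentially the same route as the paper. You pass to $Q=L_{x,y}^{*}$ with $\dim Q=n-1$ and $f_0(Q)=m$, then apply Kalai's vanishing for (1), Kalai--Braden--MacPherson monotonicity with the face/face-figure matching for (2), Bayer's generalized upper bound theorem for (3), and $[q]P_{x,y}(q)=g_1(Q)=m-n$ for (4). The detour through (2) in your argument for (4) is unnecessary, and the caveat ``provided $n\ge 2$'' is harmless, since every polytope of dimension at most one is a simplex.
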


Theorem~\ref{thm:KL-applications} uses the lattice hypothesis combined with Theorem~\ref{thm:main} to obtain  restrictions coming from results on convex polytopes. The corresponding results are due to Kalai, Braden--MacPherson, Bayer, and the theory of combinatorial intersection cohomology of fans (see \cite[Theorems~1.2--1.4]{Braden2006} and the references therein). We recall the kernel formalism of the Kazhdan--Lusztig--Stanley theory in Section~\ref{sub:KLS}, and prove Theorem~\ref{thm:KL-applications} in Section~\ref{sec:KL}. In particular, lattice Bruhat intervals provide a class for which the combinatorial invariance of Kazhdan--Lusztig polynomials follows directly from the defining recursion.  For the broader combinatorial invariance conjecture, see \cite{BlundellEtAl2022,BarkleyGaetzLam}.

Theorem~\ref{thm:KL-applications} means that the Kazhdan--Lusztig polynomials for lattice Bruhat intervals are much more restricted, and they satisfy a  stronger inequality than the usual monotonicity $P_{u,v}(q)\ge P_{w,v}(q)$ for $u\leq w\leq v$ which holds for Kazhdan--Lusztig polynomials of general Bruhat intervals. These restrictions in Theorem~\ref{thm:KL-applications} cannot hold for arbitrary Bruhat intervals, in view of Polo's realization theorem \cite{Polo1999}. However, a full characterization of what polynomials occur as toric $g$-polynomials of polytopes is currently not known. It is a well-known conjecture of Stanley that coefficients of a toric $g$-polynomial of any polytope form an $M$-sequence \cite{Stanley1987,Braden2006}. Applying this conjecture to the specific family of polytopes in Theorem~\ref{thm:main}, we have the following conjecture, which is stronger than the coefficient restrictions (1) and (3) in Theorem~\ref{thm:KL-applications}. 

\begin{conjecture} \label{conj-TKL is M}
If $[x,y]$ is a lattice Bruhat interval in any Coxeter group, then the coefficients of $P_{x,y}(q)$ form an $M$-sequence.
\end{conjecture}

Recall that a finite sequence $(c_0,c_1,\ldots,c_d)$ is an \emph{$M$-sequence} if there is a commutative standard graded Artinian algebra $A=\bigoplus_{i\geq0}A_i$ over a field, with $A_0$ equal to the field and $A$ generated by $A_1$, such that $c_i=\dim A_i$. The algebraic issue is that intersection cohomology generally has no natural ring structure. Even in the geometric setting, realizing the coefficients of $P_{x,y}(q)$ as dimensions of local intersection cohomology groups of projective toric varieties does not supply an algebra generated in degree one. This leads to a more structural version of the question.

\begin{problem}\label{prob:KL-algebra}
For each lattice Bruhat interval $[x,y]$ in a Coxeter group, give a natural construction of a commutative standard graded Artinian algebra $A_{x,y}$ with
\[
 \operatorname{Hilb}(A_{x,y};q)=P_{x,y}(q).
\]
Can such a construction be made compatible with passage to subintervals?
\end{problem}

\section{Preliminaries}\label{sub:prelim}
\subsection{Root systems for Coxeter groups}\label{sub:root-systems}
Let $(W,S)$ be a Coxeter system, with length function $\ell$, Coxeter matrix $(m_{st})_{s,t\in S}$, and reflections $T=\{wsw^{-1}:w\in W,\ s\in S\}.$ Write $\ell(x,y)=\ell(y)-\ell(x)$. We allow $W$ to be infinite, but assume that $S$ is finite. We use the real realization described in \cite[Section~1]{Patimo}. Let $\mathfrak h$ be a finite-dimensional real vector space of minimal dimension admitting linearly independent sets 
\[
\Delta=\{\alpha_s:s\in S\}\subseteq\mathfrak h^*,\qquad \Delta^\vee=\{\alpha_s^\vee:s\in S\}\subseteq\mathfrak h
\]
which are called \emph{simple roots} and \emph{simple coroots}, respectively, such that
\[
\alpha_s(\alpha_t^\vee)=-2\cos\left(\frac{\pi}{m_{st}}\right).
\]
Here $m_{ss}=1$ and $\cos(\pi/\infty)=1$. The formulas
\[
s(v)=v-\alpha_s(v)\alpha_s^\vee,\qquad s(\lambda)=\lambda-\lambda(\alpha_s^\vee)\alpha_s
\]
define dual actions of $W$ on $\mathfrak h$ and $\mathfrak h^*$. This realization is reflection-faithful: the action is faithful, and the fixed-point subspace $\mathfrak h^w$ has codimension one precisely when $w\in T$. The associated sets of \emph{roots} and \emph{coroots} are
\[
\Phi=\{w\alpha_s:w\in W,\ s\in S\},\qquad\Phi^\vee=\{w\alpha_s^\vee:w\in W,\ s\in S\}.
\]
Define the \emph{positive roots} and \emph{negative roots} to be
\[
\Phi^+=\Phi\cap\sum_{s\in S}\mathbb R_{\geq0}\alpha_s,\qquad\Phi^-=-\Phi^+.
\]
Similarly write $(\Phi^\vee)^+$ and $(\Phi^\vee)^-$ for the positive and negative coroots. Each reflection $t\in T$ determines a unique positive root $\alpha_t$ and positive coroot $\alpha_t^\vee$. Explicitly, if $t=wsw^{-1}$ and $w\alpha_s\in\Phi^+$, then $\alpha_t=w\alpha_s$ and $\alpha_t^\vee=w\alpha_s^\vee$.

\subsection{Lattice Bruhat intervals}\label{sub:lattice-Bruhat}
For $x\leq y$ in the Bruhat order, the \emph{directed Bruhat graph} $\Gamma(x,y)$ contains $[x,y]$ as vertices, and $w\rightarrow wt$ as edges, where $x\leq w<wt\leq y$, $t\in T$. The \emph{left coroot label} of this edge $w\rightarrow wt$ is $w\alpha_t^{\vee}$ which is positive. Write $\hat\Gamma(x,y)$ for its underlying undirected graph and $E_{x,y}$ for its edge set. A subset of edges $F\subseteq E_{x,y}$ is called \emph{diamond-closed} if whenever it contains two adjacent edges in a $4$-cycle of $\hat\Gamma(x,y)$, it also contains the other two. The \emph{diamond closure} of $F$ is the smallest diamond-closed subset containing $F$. And $F\subseteq E_{x,y}$ is \emph{diamond-generating} if its diamond closure equals $E_{x,y}$. Write $g_{x,y}$ for the minimum cardinality of a diamond-generating set. See \cite{Patimo} and \cite{BarkleyGaetzLam} for more details on diamond-generating sets.

Denote the atoms of $[x,y]$ by $\At(x,y):=\{z:x\lessdot z\leq y\}$ and the coatoms by $\Co(x,y):=\{z:x\leq z\lessdot y\}$. It is easy to see that any maximal chain is a diamond-generating set, and that $\{(x,a):a\in\At(x,y)\}$ and $\{(c,y):c\in\Co(x,y)\}$ also diamond generate.

An edge $w\rightarrow wt$ is called \emph{long} if $\ell(wt)-\ell(w)>1$. An interval $[x,y]$ is \emph{$S_3$-free} if it contains no subinterval isomorphic to the Bruhat order on $S_3$, also called a \emph{$2$-crown}. Here is a summary of some of the known characterizations of a lattice Bruhat interval.

\begin{theorem}\label{thm:lattice-characterization}
The following are equivalent for a Bruhat interval $[x,y]$.
\begin{enumerate}
\item The Bruhat interval $[x,y]$ is a lattice.
\item The Bruhat interval $[x,y]$ is $S_3$-free.
\item The Bruhat graph $\Gamma(x,y)$ does not contain a long edge.
\item The $R$-polynomial satisfies $R_{x,y}(q)=(q-1)^{\ell(x,y)}$.
\item Every maximal chain of $[x,y]$ is a minimum-size diamond generating set.
\end{enumerate}
\end{theorem}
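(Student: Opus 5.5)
The statement collects known results, so I will prove the equivalences that follow directly from structural facts and cite the literature for the rest. The hub is condition (3), and I will connect each other condition to it or to its neighbour. I will use Dyer's theorem that the undirected Bruhat graph $\hat\Gamma(u,v)$ is $\ell(u,v)$-regular. I will also use the fact that every length-$3$ Bruhat interval is a $k$-crown for some $k\geq 2$: a $k$-crown has $2k+2$ elements, and each element has exactly $k$ neighbours in the Hasse diagram.

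\textbf{(2)$\Leftrightarrow$(3).} Suppose $[u,v]\subseteq[x,y]$ is a $2$-crown. Then $u$ has only two Hasse neighbours, but by $3$-regularity it has degree $3$ in $\hat\Gamma(u,v)$. The third edge must be long (in fact $u\to v$), so a long edge exists.

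Conversely, suppose long edges exist, and take one, $u\to v$, with $\ell(u,v)$ minimal. Edge lengths are odd, and minimality together with the regularity argument should force $\ell(u,v)=3$. The interval $[u,v]$ is then a $k$-crown. Since $u$ has degree $3$ in $\hat\Gamma(u,v)$ and one of its edges is the long edge, $u$ has $2$ Hasse neighbours, so $k=2$. Hence $[u,v]\cong S_3$.

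The step I expect to need most care is why a minimal long edge has length $3$. My plan is to apply regularity to $[u,v]$: counting edges at $u$ against Hasse edges in a shorter interval should produce a shorter long edge unless $\ell(u,v)=3$. If this does not work directly, I will cite Dyer's result that every long edge lies above a length-$3$ interval containing a long edge.

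\textbf{(1)$\Leftrightarrow$(2).} If $[x,y]$ is a lattice, every subinterval is a lattice. A $2$-crown is not a lattice, since its two atoms have two minimal upper bounds. For the converse, that $S_3$-freeness forces the lattice property, I will invoke the known characterization from the literature rather than reprove it.

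\textbf{(3)$\Leftrightarrow$(4).} I will use Dyer's path formula. Fix a reflection order. Then $\widetilde R_{x,y}(q)$ is $\sum q^{\text{length}}$, summed over paths in $\Gamma(x,y)$ whose edge labels increase, and $R_{x,y}(q)=q^{\ell/2}\widetilde R_{x,y}(q^{1/2}-q^{-1/2})$.
\begin{itemize}
\item If there are no long edges, every path has length $\ell(x,y)$. Dyer's theorem gives exactly one increasing maximal chain, so $\widetilde R=q^{\ell}$ and $R_{x,y}(q)=(q-1)^{\ell}$.
\item Conversely, $R_{x,y}(q)=(q-1)^{\ell}$ means $\widetilde R_{x,y}=q^{\ell}$, so no increasing path of length $<\ell$ exists. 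Suppose a long edge $u\to v$ exists. I plan to prepend the lexicographically first increasing chain from $x$ to $u$ and append one from $v$ to $y$. This gives an increasing path of length $<\ell$ only if the labels are compatible, which requires choosing a reflection order adapted to $u\to v$.
\end{itemize}
This compatibility step is the main obstacle. If it fails, the fallback is to compare the coefficient of $q^{\ell-2}$ in $\widetilde R$ with the number of long edges of length $3$, using the previous paragraph.

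\textbf{(3)$\Leftrightarrow$(5).} I will rely on Patimo's and Barkley--Gaetz--Lam's results on diamond-generating sets. They give $g_{x,y}\geq\ell(x,y)$, and show that a maximal chain is minimum-size diamond-generating exactly when no long edges obstruct the diamond closure.
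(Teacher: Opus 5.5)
Your proposal has several genuine gaps, and the first is in the tool itself. The undirected Bruhat graph of an interval is not $\ell(u,v)$-regular. What holds is only a lower bound (Deodhar's inequality, proved in general by Dyer): the bottom $u$ has at least $\ell(u,v)$ neighbours in $\hat\Gamma(u,v)$. For $[e,w]$ in a Weyl group, regularity is a special property, equivalent to rational smoothness of $X_w$. Regularity already fails in length $3$. The interval $[1324,3412]\subset S_4$ has the four atoms $3124,1342,2314,1423$, so it is a $4$-crown. Since $1324$ and $3412$ differ in all four positions, $1324^{-1}3412$ is not a reflection, so there is no long edge and $1324$ has degree $4$. (Also, in a $k$-crown only the bottom and top have $k$ Hasse neighbours; atoms and coatoms have $3$.)

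The lower bound suffices for your argument that a $2$-crown $[u,v]$ contains the long edge $u\to v$, so (3)$\Rightarrow$(2) stands. The converse uses an upper bound, however. The step ``$u$ has degree $3$, hence $k=2$'' has no justification, and regularity cannot give ``a minimal long edge has length $3$'' either. Even if you cite that a long edge lies over a length-$3$ interval containing a long edge, you still must show that interval is a $2$-crown. That does not follow from any lower bound on degrees. Here you need what the paper cites for (2)$\Leftrightarrow$(3), namely \cite[Proposition~3.3]{Dyer-Bruhat-graph}.

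The implication (4)$\Rightarrow$(3) is left open. Nothing guarantees a reflection order in which the lexicographically first chain from $x$ to $u$, the edge $u\to v$, and a chain from $v$ to $y$ glue into an increasing path. The fallback is not an argument either. The coefficient $[q^{\ell-2}]\widetilde R_{x,y}$ does not count length-$3$ edges: for $[e,w_0]\subset S_4$ it equals $3$, while there are $12$ edges of length $3$. Moreover, positivity of that coefficient when a $2$-crown is present is exactly what must be proved. For (5), your inequality is reversed. A maximal chain always diamond-generates, so $g_{x,y}\le\ell(x,y)$, with strict inequality for a $2$-crown.

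The paper uses Brenti's criterion \cite[Theorem~6.3]{Brenti-combinatorial} for (2)$\Leftrightarrow$(4), and the formula $g_{x,y}=-[q^{\ell-1}]R_{x,y}(q)$ of \cite{BarkleyGaetzLam} for (4)$\Leftrightarrow$(5). That formula also rescues your route. It is equivalent to $[q^{\ell-2}]\widetilde R_{x,y}=\ell-g_{x,y}$, so (4)$\Rightarrow$(5) is immediate. Conversely, suppose $[u,v]\subseteq[x,y]$ is a $2$-crown with $u\lessdot a\lessdot c\lessdot v$. Via the long edge $u\to v$, the vertices $u,a,c,v$ form a $4$-cycle of $\hat\Gamma(x,y)$. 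So a maximal chain through $u\lessdot a\lessdot c\lessdot v$ with the edge $(c,v)$ deleted still diamond-generates, giving $g_{x,y}\le\ell-1$. This violates both (5) and (4). Combined with your correct (3)$\Rightarrow$(4) via Dyer's path formula and the cited (2)$\Rightarrow$(3), this closes the cycle (2)$\Rightarrow$(3)$\Rightarrow$(4)$\Rightarrow$(5)$\Rightarrow$(2).
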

The implication (1)$\Rightarrow$(2) is immediate with the converse being an unpublished result of Dyer (see also \cite[Section~1]{GorskyKimShermanBennett}). The equivalence of (2) and (3) follows from \cite[Proposition~3.3]{Dyer-Bruhat-graph}, while the equivalence of (2) and (4) is Brenti's criterion \cite[Theorem~6.3]{Brenti-combinatorial}. Condition (5) is saying that $g_{x,y}=\ell(x,y)$ while \cite{BarkleyGaetzLam} gives $g_{x,y}=-[q^{\ell(x,y)-1}]R_{x,y}(q)=(-1)^{\ell(x,y)-1}[q]R_{x,y}(q)$. The equivalence of (4) and (5) is then seen from \cite[Theorem~6.3]{Brenti-combinatorial}.

We say that $[x,y]$ is a \emph{lattice Bruhat interval} if the Bruhat interval $[x,y]$ is a lattice.

\subsection{Face structures of polytopes and cones}\label{sub:normal-fan}
For a polytope $P\subset V$ where $V$ is a real vector space, its \emph{faces} are given by $F_f=\{v\in P:f(v)=\max_{p\in P}f(p)\}$, where $f\in V^*$ is called a \emph{supporting vector} and $F_f$ is the corresponding \emph{supporting face}. The \emph{complete face lattice} $\mathcal{F}(P)$ of a polytope $P$ is the poset of all faces of $P$ ordered by inclusion, together with the $(-1)$-dimensional face $\emptyset$ which is the minimum of this poset and $P$ itself which is the maximum. It is easy to see that this poset is indeed a lattice, with the meet operation given by intersection. 

In this paper, a \emph{cone} always means a convex polyhedral cone. We say that a cone $C\subset V$ is \emph{pointed} if there exists $f\in V^*$ such that $f(v)>0$ for all $v\in C\setminus\{0\}$. For a pointed cone $C$, its \emph{faces} are given by $F_f=\{v\in C:f(v)=0\}$ where $f\in V^*$ satisfies that $f(v)\geq0$ for all $v\in C$. Analogously, $f$ is called a \emph{supporting vector} and $F_f$ is the corresponding \emph{supporting face}. The \emph{complete face lattice} $\mathcal{F}(C)$ of a pointed cone $C$ is the poset of all faces of $C$ ordered by inclusion, together with maximum $C$ and minimum $\{0\}$.

Given a polytope $P\subset V$, we can construct a pointed cone $C(P)=\{(tv,t):v\in P,t\in\R_{\geq0}\}\subset V\oplus \R$. They have the same face structure. Conversely, given a pointed cone $C$, we can take its \emph{vertex figure} $\VF(C)=\{v\in C:f(v)=1\}$ for some $f\in V^*$ such that $f(v)>0$ for all $v\in C\setminus\{0\}$. This is a polytope with the same face structure as $C$. Only the combinatorial type of $\VF(C)$ matters for our purposes, as it is independent of the chosen strictly positive functional.

Similarly, for a polytope $P$ and a vertex $p$, we can also take the \emph{vertex figure} of $P$ at $p$, defined as $\VF_p(P) =\{v\in P:f(v)=f(p)-\epsilon\}$ where $f$ is a supporting vector of $p$, and $\epsilon>0$ is small enough. The complete face lattice of $\VF_p(P)$ is then the principal order filter in $\mathcal{F}(P)$ generated by $p$.

After translating a polytope $P$ and working in its affine span, we may assume, for combinatorial purposes, that $P\subset V$ is full-dimensional and contains the origin in its interior. Its \emph{polar dual} is \[
P^*:=\{f\in V^*:f(v)\leq1\text{ for all }v\in P\}.
\]
The complete face lattice of $P^*$ is dual to that of $P$. 

\subsection{Kazhdan--Lusztig--Stanley theory} \label{sub:KLS}

We briefly recall the definitions in the Kazhdan--Lusztig--Stanley theory, keeping track of left and right conventions. We follow the convention and terminology of \cite{Proudfoot2018}, which we recommend as a wonderful survey in this theory.

Let $\mathcal L$ be a finite ranked poset. For $a\le b$, let $r(a,b)$ denote their rank difference. The \emph{incidence algebra} $I(\mathcal L)$ of $\mathcal L$ consists of families
$$
f=(f_{a,b}(q))_{a\leq b}, \quad \text{ with } f_{a,b}(q)\in \mathbb{Z}[q],
$$
whose ring structure is given by convolution
\[
 (fg)_{a,c}(q)=\sum_{a\leq b\leq c}f_{a,b}(q)g_{b,c}(q).
\]
The identity element is $\delta\in I(\mathcal L)$, with $\delta_{a,a}=1$ and $\delta_{a,b}=0$ for $a<b$. 

For $f\in I(\mathcal L)$ satisfying $\deg f_{a,b}\leq r(a,b)$, we  define an involution $f \mapsto \overline f$ by
\[
 \overline f_{a,b}(q)=q^{r(a,b)}f_{a,b}(q^{-1}).
\]

An element $\kappa\in  I(\mathcal L)$ is called a \emph{kernel} if  $\kappa_{a,a}=1$ for all $a \in \mathcal L$ and $\overline\kappa=\kappa^{-1}$. If $\kappa\in  I(\mathcal L)$ is a kernel, then there are unique elements $f$ and $g$ with degree less than ${r(a,b)}/2$ for $a<b$ and  $f_{a,a}=g_{a,a}=1$ for all $a \in \mathcal L$, satisfying
\begin{equation*}
 \overline f=\kappa f,\qquad \overline g=g\kappa.
\end{equation*}
We refer to $f$ as the \emph{right KLS function} associated with $\kappa$ and to $g$ as the \emph{left KLS function} associated with $\kappa$. Note that reversing the order on the poset exchanges $f$ and $g$.

For a Bruhat interval $[x,y]$, the Kazhdan--Lusztig $R$-polynomials form a kernel and its right KLS function $P=(P_{a,b}(q))$ gives the classical Kazhdan--Lusztig polynomials \cite{KL1979}. If $\mathcal L$ is the face lattice of a polytope (or more generally an Eulerian poset), then
\begin{equation*}
 \epsilon_{a,b}(q)=(q-1)^{r(a,b)} \text{ for all } a\le b
\end{equation*}
is a kernel \cite[Proposition 7.1]{Stanley1992}, known as the \emph{Eulerian kernel}. 

\section{The polytopal realization}\label{sec:polytope}
Let $[x,y]$ be a Bruhat interval of $(W,S)$. Our main tool is the \emph{root-section space} 
\[V_{x,y}:=\left\{\varphi\in(\mathfrak{h}^*)^{[x,y]}:\varphi(wt)-\varphi(w)\in\R\alpha_t\text{ for }x\leq w<wt\leq y\right\}/\sim\]
where $\varphi\sim\varphi'$ if they differ by a constant. This is the space considered in \cite[Section~4]{Patimo}, consisting of degree-two sections of the structure sheaf on the Bruhat moment graph. Dyer \cite{Dyer97} proved that $\dim V_{x,y}=-[q^{\ell(x,y)-1}]R_{x,y}(q)$. See also \cite[Remark~4.3]{Patimo} and \cite{BarkleyGaetzLam}.

For $x\leq w<wt\leq y$, define $\lambda_{w,wt}:V_{x,y}\rightarrow\R$ by 
\[\lambda_{w,wt}(\varphi)\cdot\alpha_t=\varphi(wt)-\varphi(w).\] 
For an atom $a\in \At(x,y)$, we write $\lambda_a:=\lambda_{x,a}$ for simplicity. Define 
\[C_{x,y}:=\mathrm{cone}\{\lambda_a:a\in\At(x,y)\}\subseteq V_{x,y}^*.\]

Throughout this section, fix $\eta\in\mathfrak{h}^*$ such that $\eta(\alpha_s^{\vee})>0$ for all $s\in S$. Subsequently fix $\theta:[x,y]\rightarrow\mathfrak{h}^*$ defined by $\theta(w)=-w^{-1}\eta$. Then for $x\leq w<wt\leq y$, we calculate that 
\[\theta(wt)-\theta(w)=-tw^{-1}\eta+w^{-1}\eta=(w^{-1}\eta)(\alpha_t^{\vee})\alpha_t=\eta(w\alpha_t^{\vee})\cdot\alpha_t.\]
Since $w<wt$, $w\alpha_t^{\vee}$ is a positive coroot and thus $\eta(w\alpha_t^{\vee})>0$. This means that $\theta\in V_{x,y}$ and moreover, $\lambda_{w,wt}(\theta)>0$ for all $w<wt$ in the interval $[x,y]$. In particular, $\lambda_a(\theta)>0$ for all $a\in\At(x,y)$, meaning that $C_{x,y}$ is a pointed cone. We can then take a vertex figure 
\[L_{x,y}:=\mathrm{conv}\{\lambda_a/\lambda_a(\theta):a\in\At(x,y)\}\subseteq V_{x,y}^{*}.\]
The remaining goal of this section is to show that for a lattice Bruhat interval $[x,y]$, the cone $C_{x,y}$, or equivalently, the polytope $L_{x,y}$, has the desired complete face lattice $[x,y]$.

\subsection{Basic properties}
We start with a rank-two calculation. See also \cite[Lemma~3.2]{Jahn-Stump}.
\begin{lemma}\label{lem:rank-2-positivity}
Let $[u,v]=\{u,a,b,v\}$ be a diamond. Then we have a relation 
\[\lambda_{u,b}=\kappa\lambda_{u,a}+\tau\lambda_{a,v},\quad\text{for }\kappa\in\R,\ \tau\in\R_{>0}.\]
In particular, for $\varphi\in V_{x,y}$, if $\lambda_{e_1}(\varphi)=\lambda_{e_2}(\varphi)=0$ for two adjacent edges $e_1$ and $e_2$ of this diamond, then $\lambda_e(\varphi)=0$ for every edge $e$ of this diamond.
\end{lemma}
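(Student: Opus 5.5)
Write $b=ut_1$, $a=ut_2$, $v=at_3=bt_4$ for reflections $t_i$. Evaluating at $\varphi$, the two paths from $u$ to $v$ agree:
\[
\lambda_{u,a}(\varphi)\alpha_{t_2}+\lambda_{a,v}(\varphi)\alpha_{t_3}=\lambda_{u,b}(\varphi)\alpha_{t_1}+\lambda_{b,v}(\varphi)\alpha_{t_4}.
\]
The plan is to extract $\lambda_{u,b}$ from this identity by a linear functional on $\mathfrak h^*$ that kills $\alpha_{t_4}$. The natural choice is evaluation at the coroot $\alpha_{t_4}^\vee$, or better at $b\alpha_{t_4}^\vee$ after passing to left-translated labels, so that positivity statements about coroot labels become available.

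\textbf{Step 1: reduce to a dihedral subsystem.} The four reflections $ut_1u^{-1},\dots$ (left versions) all lie in the reflection subgroup $W'$ generated by $u t_1 u^{-1}$ and $u t_2 u^{-1}$. Indeed $vu^{-1}=a t_3 u^{-1}=(ut_2u^{-1})(a t_3 a^{-1})$, and similarly through $b$. Since $vu^{-1}$ is a product of two reflections, all four left reflections lie in a rank-two (dihedral) reflection subgroup. The corresponding positive roots $\beta_1=u\alpha_{t_1}$, $\beta_2=u\alpha_{t_2}$, $\beta_3=a\alpha_{t_3}$, $\beta_4=b\alpha_{t_4}$ therefore span a plane $U$. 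They are positive, because $u<ut_i$ etc. give positive left labels.

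\textbf{Step 2: convert and solve in the plane.} Apply $u$ to the identity:
\[
\lambda_{u,a}\beta_2+\lambda_{a,v}\,u t_2\alpha_{t_3}=\lambda_{u,b}\beta_1+\lambda_{b,v}\,ut_1\alpha_{t_4}.
\]
Here $ut_2\alpha_{t_3}=a\alpha_{t_3}=\beta_3$ and $ut_1\alpha_{t_4}=\beta_4$, so all four terms lie in $U$. If $\beta_1$ and $\beta_4$ were proportional, they would be equal (both positive roots), and then $u\to b\to v$ would be the same reflection applied twice, forcing $v=u$, which is impossible. So $\{\beta_1,\beta_4\}$ is a basis of $U$. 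Pick the linear functional $\pi$ on $U$ with $\pi(\beta_4)=0$ and $\pi(\beta_1)=1$. This yields
\[
\lambda_{u,b}=\pi(\beta_2)\lambda_{u,a}+\pi(\beta_3)\lambda_{a,v},
\]
with $\kappa=\pi(\beta_2)$ and $\tau=\pi(\beta_3)$.

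\textbf{Step 3: positivity of $\tau$.} This is the main obstacle. We must show that $\beta_3$ and $\beta_1$ lie on the same side of the line $\R\beta_4$ in $U$. The positive roots of the dihedral subsystem are linearly ordered by angle in the plane, and a length-two Bruhat interval in the dihedral reflection subgroup (via Dyer's theory, the interval $[u,v]$ is isomorphic to one in $W'$) has the form $\{e, r_1, r_2, r_1r_2\}$ after translation. I would compute explicitly in this standard dihedral picture. With simple roots $\gamma_1,\gamma_2$ of $W'$, the edges $u\to ur$ correspond to $\gamma_1,\gamma_2$ and the top edges to $r_1\gamma_2$ and $r_2\gamma_1$, which are positive combinations. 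Then $\beta_4$ is, say, $r_2\gamma_1=\gamma_1+c\gamma_2$ with $c>0$, while $\beta_1=\gamma_2$ and $\beta_3=r_1\gamma_2=\gamma_2+c'\gamma_1$. One checks that $\gamma_2$ and $\gamma_2+c'\gamma_1$ lie on the same side of the line $\R(\gamma_1+c\gamma_2)$ exactly when $1-cc'>0$. This holds because $cc'=4\cos^2(\pi/m)<4$ is not quite enough; instead, in the length-two situation one uses $cc'\le\ldots$. Here I expect care is needed, and I would alternatively verify the sign by evaluating against the positive element $\theta$ from the text: every $\lambda_e(\theta)>0$. Applying the relation to $\theta$ and to a second positive-type section (e.g.\ built from a different $\eta$) pins down the sign of $\tau$.

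\textbf{Step 4: the vanishing statement.} The final claim follows from the relation together with the analogous relations obtained by symmetry (swapping $a,b$ and reversing the order). Suppose two adjacent edges vanish. If they meet at $u$ or at $a$, the relation (or its version with $a,b$ swapped) gives that $\lambda_{u,b}$ vanishes. The identity then forces $\lambda_{b,v}\beta_4=0$. The remaining cases are handled symmetrically.
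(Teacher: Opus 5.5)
Your Step 3 leaves a genuine gap, and it is where the whole content of the lemma lies. The existence of $\kappa,\tau$ and the vanishing statement are two-dimensional linear algebra, but you never establish $\tau>0$.

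Your dihedral computation mixes two different diamonds. $\beta_3=r_1\gamma_2$ is the left label of $a\lessdot v$ when $v=r_1r_2$, whereas $\beta_4=r_2\gamma_1$ is the left label of $b\lessdot v$ when $v=r_2r_1$ (for $v=r_1r_2$ one has $\beta_4=\gamma_1$). This is why you arrive at the criterion $1-cc'>0$, which fails for every $m\geq 3$. The reduction to $\{e,r_1,r_2,r_1r_2\}$ ``after translation'' is also unjustified: the corresponding length-two interval in the dihedral reflection subgroup can sit at any height, and translation preserves neither Bruhat order nor labels.

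Your fallback cannot work either. Evaluating the relation on all sections $\theta_\zeta(w)=-w^{-1}\zeta$ yields exactly the coroot identity $\beta=\kappa\alpha+\tau\gamma$, where $\alpha,\beta,\gamma$ are the positive left coroot labels of $u\lessdot a$, $u\lessdot b$, $a\lessdot v$. Positivity of $\eta(\alpha),\eta(\beta),\eta(\gamma)$ is just as compatible with $\tau<0$, which would only mean that $\alpha$ lies in the cone spanned by $\beta$ and $\gamma$. What is missing is an input that pins down the relative position of these three coroots.

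The paper obtains it from Dyer's EL-shellability. Choose $\zeta$ with $\zeta(\alpha)=0$ and $\zeta(\gamma)=1$, and order positive coroots by $\zeta(\rho)/\eta(\rho)$ with lexicographic tie-breaking; this is a reflection order. Then $u\lessdot a\lessdot v$ is strictly increasing, hence lexicographically first. So the first label $\beta$ of the other chain satisfies $\zeta(\beta)/\eta(\beta)=\tau/\eta(\beta)\geq 0$, and $\tau\neq 0$ because $\alpha,\beta$ are independent. An explicit computation for a length-two interval at arbitrary height in a dihedral group would be an alternative, but it would have to be carried out.

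Separately, Step 2 contains an error. Applying $u$ to $\alpha_{t_3}$ gives $u\alpha_{t_3}$, not $ut_2\alpha_{t_3}=a\alpha_{t_3}$, and likewise $u\alpha_{t_4}\neq b\alpha_{t_4}$ in general. So your displayed identity and the formulas $\kappa=\pi(\beta_2)$, $\tau=\pi(\beta_3)$ are incorrect. For example, in $S_3$ with $u=e$, $a=s_1$, $b=s_2$, $v=s_1s_2$, the true relation is $\lambda_{u,b}=\lambda_{a,v}-\lambda_{u,a}$, whereas your formula gives $\kappa=0$.

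The existence of the relation is easily repaired by working directly with the right roots $\alpha_{t_1},\dots,\alpha_{t_4}$, using only that $\alpha_{t_1}$ and $\alpha_{t_4}$ are independent. However, any sign analysis of your $\pi(\beta_3)$ would be analyzing the wrong quantity.
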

\begin{proof}
The coefficients on either saturated chain of the diamond determine the rest. Hence there are scalars $\kappa,\tau\in\R$ such that $\lambda_{u,b}=\kappa\lambda_{u,a}+\tau\lambda_{a,v}.$

Let $\alpha,\beta,\gamma$ be the positive left coroot labels of $u\lessdot a$, $u\lessdot b$, and $a\lessdot v$, respectively. Evaluating this relation on $\theta_\zeta(w)=-w^{-1}\zeta$ gives $\beta=\kappa\alpha+\tau\gamma$. 

Since $\alpha$ and $\beta$ are independent, $\tau\neq0$. Choose $\zeta\in\mathfrak h^*$ with $\zeta(\alpha)=0$ and $\zeta(\gamma)=1$. Order positive coroots by $\zeta(\rho)/\eta(\rho)$, breaking ties lexicographically using fixed linear coordinates of $\beta/\eta(\beta)$. This is a reflection order \cite[Section~2.2]{Dyer-Hecke}. The chain $u\lessdot a\lessdot v$ is strictly increasing, so it is lexicographically first \cite[Proposition~4.3]{Dyer-Hecke}. Its first label is $0$, whereas the first label of the other chain is $\zeta(\beta)/\eta(\beta)=\tau/\eta(\beta)$. Therefore $\tau>0$.
\end{proof}

\begin{lemma}\label{lem:lattice-bruhat-dimension}
Let $[x,y]$ be a lattice Bruhat interval of rank $n$. Then $\dim V_{x,y}=n$. Moreover, for any maximal chain $x=x_0\lessdot x_1\lessdot\cdots\lessdot x_{n}=y$, the map $\oplus_{i=1}^n\lambda_{x_{i-1},x_{i}}:V_{x,y}\rightarrow\R^n$ is a linear isomorphism of vector spaces.
\end{lemma}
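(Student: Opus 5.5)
The dimension count is immediate from what is already available. By Dyer's theorem quoted above, $\dim V_{x,y}=-[q^{\ell(x,y)-1}]R_{x,y}(q)$. For a lattice interval, Theorem~\ref{thm:lattice-characterization}(4) gives $R_{x,y}(q)=(q-1)^n$. The coefficient of $q^{n-1}$ in $(q-1)^n$ is $-n$, so $\dim V_{x,y}=n$. It therefore suffices to show that the map $\Lambda=\oplus_{i=1}^n\lambda_{x_{i-1},x_i}$ is injective, since then it is an isomorphism by equality of dimensions.

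For injectivity, suppose $\varphi\in V_{x,y}$ satisfies $\lambda_{x_{i-1},x_i}(\varphi)=0$ for all $i$. Let $F=\{e\in E_{x,y}:\lambda_e(\varphi)=0\}$. First I check that $F$ is diamond-closed. A $4$-cycle of $\hat\Gamma(x,y)$ in a lattice interval has no long edges by Theorem~\ref{thm:lattice-characterization}(3). Such a cycle is therefore a diamond $[u,v]=\{u,a,b,v\}$ in the Bruhat order; I would check that a $4$-cycle with all edges of length one and alternating ranks must come from such a rank-two subinterval. Now suppose two adjacent edges of the diamond lie in $F$. Lemma~\ref{lem:rank-2-positivity} then says all four edges lie in $F$.

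Since $F$ contains the maximal chain, and the maximal chain is diamond-generating (stated in \S\ref{sub:lattice-Bruhat}), we get $F=E_{x,y}$. So $\varphi(wt)=\varphi(w)$ for every edge of the Bruhat graph. The graph $\hat\Gamma(x,y)$ is connected, as every element lies on a maximal chain from $x$. Hence $\varphi$ is constant, that is, $\varphi=0$ in $V_{x,y}$. This proves injectivity, and with the dimension count, the lemma.

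The main point needing care is the identification of $4$-cycles in $\hat\Gamma(x,y)$ with Bruhat diamonds, so that Lemma~\ref{lem:rank-2-positivity} applies. With no long edges, every edge joins elements of adjacent rank, and a $4$-cycle alternates ranks $r,r+1,r,r+1$ or has the shape $u,a,v,b$ with $u$ of rank $r$, $a,b$ of rank $r+1$ and $v$ of rank $r+2$. In the alternating case, two elements of rank $r+1$ would both cover two elements of rank $r$; this is the situation forbidden by the lattice property, since two common lower covers would both be maximal lower bounds, so their meet is not unique. The only remaining shape is a genuine interval $[u,v]$, which has exactly two middle elements because Bruhat intervals of rank two are diamonds.
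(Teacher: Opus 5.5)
Your proposal is correct and follows the paper's proof: Dyer's formula with $R_{x,y}(q)=(q-1)^n$ gives $\dim V_{x,y}=n$, and injectivity follows because the maximal chain is diamond-generating, combined with Lemma~\ref{lem:rank-2-positivity}. You also fill in two steps the paper leaves implicit: that in an interval with no long edges every $4$-cycle of $\hat\Gamma(x,y)$ is a genuine Bruhat diamond, and that connectivity of $\hat\Gamma(x,y)$ turns the vanishing of all $\lambda_e(\varphi)$ into $\varphi=0$ modulo constants.
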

\begin{proof}
It is clear that this map is linear. By Dyer \cite{Dyer97} and Theorem~\ref{thm:lattice-characterization}, $\dim V_{x,y}=-[q^{\ell(x,y)-1}]R_{x,y}(q)=n$. It then suffices to show that this map is injective. Since this chain diamond generates, by Lemma~\ref{lem:rank-2-positivity}, for $\varphi\in V_{x,y}$, if $\lambda_{x_{i-1},x_{i}}(\varphi)=0$ for all $i\in[n]$, then indeed $\lambda_{w,wt}(\varphi)=0$ for all edges $(w,wt)$ of the Bruhat graph.
\end{proof}

Lemma~\ref{lem:lattice-bruhat-dimension} shows that for any maximal chain $x=x_0\lessdot x_1\lessdot\cdots\lessdot x_{n}=y$ in a lattice Bruhat interval $[x,y]$ of rank $n$, we may arbitrarily specify $\lambda_{x_{i-1},x_i}(\varphi)$ for all $i\in[n]$ to recover a unique $\varphi\in V_{x,y}$. We now specify some special elements in $V_{x,y}$, which will later serve as supporting vectors for facets of $C_{x,y}$.

\begin{definition}\label{def:fc}
For each coatom $c\in\Co(x,y)$, define $f^c\in V_{x,y}$ such that $\lambda_{x_{i-1},x_i}(f^c)=0$ for $i=1,\ldots,n-1$ and $\lambda_{c,y}(f^c)=1$, where $x=x_0\lessdot x_1\lessdot\cdots\lessdot x_{n-1}=c\lessdot y$ is a maximal chain. This definition does not depend on the choice of maximal chain from $x$ to $c$.
\end{definition}
\begin{ex}
Consider the lattice Bruhat interval $[1324,4231]$ in $S_4$. Choose a coatom $c=3241$, and we compute $f^c$ as in Figure~\ref{fig:fc}. Note that among the atoms, $1342,2314,3124$ are below $c$ while $1423\nleq c$ and we have $\lambda_{1423}(f^c)=1$.
\begin{figure}[h!]
\centering
\begin{tikzpicture}[
    x=1cm,
    y=1cm,
    edge/.style={
        draw=black!65,
        line width=.55pt,
        preaction={draw=white,line width=2.4pt}
    },
    vertex/.style={
        circle,
        fill=black,
        inner sep=0pt,
        minimum size=3.6pt
    },
    permutation/.style={
        font=\small,
        fill=white,
        inner sep=1.5pt
    },
    value/.style={
        font=\small,
        text=red!75!black,
        fill=white,
        inner sep=1.5pt
    }
]


\coordinate (v1324) at (0,0);

\coordinate (v1342) at (-4.5,2);
\coordinate (v1423) at (-1.5,2);
\coordinate (v2314) at ( 1.5,2);
\coordinate (v3124) at ( 4.5,2);

\coordinate (v1432) at (-6.0,4);
\coordinate (v2341) at (-3.6,4);
\coordinate (v2413) at (-1.2,4);
\coordinate (v3142) at ( 1.2,4);
\coordinate (v3214) at ( 3.6,4);
\coordinate (v4123) at ( 6.0,4);

\coordinate (v2431) at (-4.5,6);
\coordinate (v3241) at (-1.5,6);
\coordinate (v4132) at ( 1.5,6);
\coordinate (v4213) at ( 4.5,6);

\coordinate (v4231) at (0,8);

\foreach \u/\v in {
    1324/1342,1324/1423,1324/2314,1324/3124,
    1342/1432,1342/2341,1342/3142,
    1423/1432,1423/2413,1423/4123,
    2314/2341,2314/2413,2314/3214,
    3124/3142,3124/3214,3124/4123,
    1432/2431,1432/4132,
    2341/2431,2341/3241,
    2413/2431,2413/4213,
    3142/3241,3142/4132,
    3214/3241,3214/4213,
    4123/4132,4123/4213,
    2431/4231,3241/4231,4132/4231,4213/4231%
}{
    \draw[edge] (v\u) -- (v\v);
}

\foreach \w/\f in {
    1324/{0},
    1342/{0},
    1423/{\alpha_2+\alpha_3},
    2314/{0},
    3124/{0},
    1432/{\alpha_2},
    2341/{0},
    2413/{\alpha_2+\alpha_3},
    3142/{0},
    3214/{0},
    4123/{\alpha_1+\alpha_2+\alpha_3},
    2431/{\alpha_2},
    3241/{0},
    4132/{\alpha_1+\alpha_2},
    4213/{\alpha_1+\alpha_2+\alpha_3},
    4231/{\alpha_1+\alpha_2}%
}{
    \node[vertex] at (v\w) {};
    \node[permutation,anchor=south,yshift=3pt]
        at (v\w) {$\w$};
    \node[value,anchor=north,yshift=-3pt]
        at (v\w) {$\f$};
}

\end{tikzpicture}
\caption{A representative of $f^c$, normalized to vanish at $1324$}
\label{fig:fc}
\end{figure}
\end{ex}

\subsection{The main positivity lemma}
The following positivity lemma is the main step.
\begin{lemma}\label{lem:positivity-main}
Let $[x,y]$ be a lattice Bruhat interval and define $f^c$ for $c\in\Co(x,y)$ as in Definition~\ref{def:fc}. For an atom $a\in\At(x,y)$, $\lambda_a(f^c)>0$ if $a\nleq c$ and $\lambda_a(f^c)=0$ if $a\leq c$.
\end{lemma}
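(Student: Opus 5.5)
Throughout, write $\lambda_e(f^c)$ for edges $e$ of $\hat\Gamma(x,y)$, and set $Z_c=\{e\in E_{x,y}:\lambda_e(f^c)=0\}$. The plan is to first identify the edges on which $f^c$ vanishes, and then get strict positivity on the remaining atom edges by induction on rank, using Lemma~\ref{lem:rank-2-positivity}.

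\textbf{The vanishing part.} By Lemma~\ref{lem:rank-2-positivity}, $Z_c$ is diamond-closed. By construction it contains a maximal chain of $[x,c]$, and such a chain diamond-generates the edges of $\hat\Gamma(x,c)$ (Theorem~\ref{thm:lattice-characterization}(5) applied to the lattice interval $[x,c]$). Hence every edge inside $[x,c]$ lies in $Z_c$. In particular $\lambda_a(f^c)=0$ for $a\le c$. This also shows that Definition~\ref{def:fc} is independent of the chain. Note also that $\lambda_{c,y}(f^c)=1$.

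\textbf{The positive part.} Take $a\not\le c$ and argue by induction on $n=\ell(x,y)$. If $n=2$, then $[x,y]=\{x,c,a,y\}$ is a diamond. Applying Lemma~\ref{lem:rank-2-positivity} with the chain $x\lessdot c\lessdot y$ gives $\lambda_{x,a}=\kappa\lambda_{x,c}+\tau\lambda_{c,y}$, so $\lambda_a(f^c)=\tau>0$.

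For general $n$, use the lattice structure: $a\vee c=y$, and $a\wedge c=x$, since $a$ is an atom not below $c$. Choose a coatom $d\ge a$ with $d\ne c$; one exists because $a\not\le c$. Let $e=c\wedge d$. The lattice Bruhat interval $[e,y]$ should have rank $2$, i.e.\ $e\lessdot c$ and $e\lessdot d$, so that $\{e,c,d,y\}$ is a diamond. This is the main obstacle: it requires that in a lattice Bruhat interval the meet of two coatoms has corank $2$. I would derive it from Theorem~\ref{thm:lattice-characterization}, using that Bruhat intervals of length $2$ are diamonds together with the no-long-edge condition; alternatively I would replace the meet by any $e\lessdot c,d$, which exists by the standard Bruhat ``lifting''/diamond property. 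Applying Lemma~\ref{lem:rank-2-positivity} to this diamond, with $\lambda_{e,c}(f^c)=0$ (since $e\le c$) and $\lambda_{c,y}(f^c)=1$, gives $\lambda_{e,d}(f^c)=\tau>0$.

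Now restrict $f^c$ to $[x,d]$, a lattice interval of rank $n-1$ with coatom $e$. The restriction vanishes on a chain $x\lessdot\cdots\lessdot e$ and equals $\tau$ on $e\to d$. By uniqueness (Lemma~\ref{lem:lattice-bruhat-dimension}), it is $\tau f^{e}_{[x,d]}$. The atom $a$ satisfies $a\le d$ and $a\not\le e$, since $a\not\le c$. By induction, $\lambda_a(f^c)=\tau\lambda_a(f^e_{[x,d]})>0$.

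The restriction step needs $V_{x,y}\to V_{x,d}$ to commute with the $\lambda$'s, which is immediate from the definitions. The delicate point remains establishing the diamond $\{e,c,d,y\}$ with $e\le c$ and $e\lessdot d$, where $d\ge a$.
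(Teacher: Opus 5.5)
Your vanishing part, the base case $n=2$, and the restriction step are fine. The restriction step is the identity $f^c|_{[x,d]}=\tau f^{e}_{[x,d]}$, valid once a diamond $\{e,c,d,y\}$ with $e\le c$ is available. The gap is exactly the step you flagged, and it cannot be filled as proposed.

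\textbf{Your corank-$2$ claim is false.} It is not true that two coatoms of a lattice Bruhat interval meet in corank $2$. In $S_4$, the interval $[1324,3412]$ is a $4$-crown, the face lattice of a square. Its coatoms $1432$ and $3214$ have lower covers $\{1342,1423\}$ and $\{2314,3124\}$, respectively, even in all of $S_4$. So $1432\wedge 3214=x$, and the lifting alternative fails as well. Nothing in Theorem~\ref{thm:lattice-characterization} yields such a statement: for face lattices of polytopes it would say that any two facets share a ridge.

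\textbf{Choosing $d$ more carefully does not rescue the argument.} You would need every atom $a\nleq c$ to lie below some coatom that shares a lower cover with $c$. This already fails in rank $3$. Take the universal Coxeter group on $s_1,s_2,s_3$ (all $m_{ij}=\infty$). There reduced words are unique, so Bruhat order is subword order. Write $123$ for $s_1s_2s_3$, and so on.

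\begin{itemize}
\item The interval $[12,12312]$ is a $5$-crown.
\item Its atoms are $212,121,123,132,312$.
\item Its coatoms $1212,1231,1232,1312,2312$ cover the cyclically consecutive pairs $\{212,121\}$, $\{121,123\}$, $\{123,132\}$, $\{132,312\}$, $\{312,212\}$.
\item Take $c=1212$ and $a=132$. The only coatoms above $a$ are $1232$ and $1312$. Their lower covers (even in all of $W$) are $\{123,132\}$ and $\{132,312\}$, with $bcb$, $aca$ added in $W$.
\item Neither set meets $\{212,121\}$, the lower covers of $c$.
\end{itemize}

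So your induction never reaches $\lambda_{132}(f^{1212})$.

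\textbf{What is missing is a global argument.} This is a local-versus-global convexity problem. Propagating positivity across single diamonds only reaches atoms one step away from $[x,c]$. The paper's own local step has the same limited reach: it goes through $[b,y]$ for an atom $b\le c$ adjacent to $a$ in the atom graph, and in the pentagon above it also misses $132$.

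The paper supplies the missing global input with Lemma~\ref{lem:sublevel-connectivity}. That lemma shows the set $\{a:\lambda_a(f^c)\le 0\}$ is connected in the atom graph; its proof uses reflection orders, descent flips and Newman's lemma. This set contains the atoms below $c$. Every atom-graph edge leaving those atoms ends at an atom with strictly positive value. Hence the set consists exactly of the atoms below $c$. Your proof needs an ingredient of this kind; the coatom-adjacency induction alone cannot give it.
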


We first develop some tools. Define the \emph{atom graph} $A(x,y)$, which has vertex set $\At(x,y)$ and an edge $a\sim b$ if there exists $z\in [x,y]$ such that $a\lessdot z$ and $b\lessdot z$. This graph is connected, since all maximal chains are connected by diamond flips (see for example \cite{Blanco-flip-posets}).

\begin{lemma}\label{lem:short-ordinary-sections}
Let $[x,y]$ be a lattice Bruhat interval with $1\leq\ell(x,y)\leq3$. Every $\varphi\in V_{x,y}$ is represented by $w\mapsto-w^{-1}\zeta$ for some $\zeta\in\mathfrak h^*$.
\end{lemma}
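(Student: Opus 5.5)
Let $n=\ell(x,y)\in\{1,2,3\}$ and let $\Theta:\mathfrak h^*\to V_{x,y}$ be the linear map $\zeta\mapsto\theta_\zeta$, where $\theta_\zeta(w)=-w^{-1}\zeta$. The computation preceding Lemma~\ref{lem:rank-2-positivity} shows that $\Theta$ indeed lands in $V_{x,y}$. It gives
\[
\lambda_{w,wt}(\theta_\zeta)=\zeta(w\alpha_t^\vee).
\]
By Lemma~\ref{lem:lattice-bruhat-dimension}, $\dim V_{x,y}=n$. Moreover, after fixing a maximal chain $x=x_0\lessdot x_1\lessdot\cdots\lessdot x_n=y$, the map $\varphi\mapsto(\lambda_{x_{i-1},x_i}(\varphi))_i$ is an isomorphism $V_{x,y}\to\R^n$. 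The composite $\mathfrak h^*\to\R^n$ is therefore
\[
\zeta\mapsto\bigl(\zeta(\beta_1),\dots,\zeta(\beta_n)\bigr),
\]
where $\beta_i$ is the left coroot label of $x_{i-1}\lessdot x_i$. This composite is surjective exactly when $\beta_1,\dots,\beta_n$ are linearly independent in $\mathfrak h$. So the claim reduces to showing that the left coroot labels along some (equivalently any) maximal chain are linearly independent.

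For $n=1$ this is trivial, since a positive coroot is nonzero. For $n=2$, the interval is a diamond $\{u,a,b,v\}$. In the proof of Lemma~\ref{lem:rank-2-positivity} the labels along the chain $u\lessdot a\lessdot v$ were already seen to be independent: $\beta=\kappa\alpha+\tau\gamma$ with $\tau\ne0$, and $\alpha,\beta$ are independent. I will simply make that argument explicit. The labels $\alpha=u\alpha_{t_1}^\vee$ and $\beta=u\alpha_{t_2}^\vee$ with $t_1\ne t_2$ are distinct positive coroots. Distinct positive coroots are non-proportional because the realization is reflection-faithful. If $\gamma$ were a multiple of $\alpha$, then the relation would force $\beta\in\R\alpha$, a contradiction.

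For $n=3$, write $\beta_1,\beta_2,\beta_3$ for the labels of $x\lessdot x_1\lessdot x_2\lessdot y$. By the rank-two case applied to $[x,x_2]$ and to $[x_1,y]$ (both diamonds, since the interval is a lattice and hence $S_3$-free), the pairs $\beta_1,\beta_2$ and $\beta_2,\beta_3$ are each independent. Suppose for contradiction that $\beta_3=p\beta_1+r\beta_2$ with $p\ne0$.

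I expect the main obstacle to be ruling out this degenerate configuration. My plan is to use the structure of lattice intervals of rank $3$ together with Dyer's result that all labels in $[x,y]$ lie in the span generated along a chain. A rank-three interval that is a lattice with no long edges has face lattice of a polygon: $k$ atoms and $k$ coatoms forming a $2k$-cycle. The plan is to compute the coroot labels via a reflection subgroup. By Dyer's theory, the reflections $x^{-1}t$ labelling edges out of $x$ generate a reflection subgroup $W'$, and $[x,y]$ is isomorphic, with matching labels, to an interval $[e,y']$ in $W'$ with its canonical simple system. If the labels spanned only a $2$-dimensional space, then $W'$ would be a rank-$2$ (dihedral) reflection subgroup. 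Intervals of length $3$ in a dihedral group are not lattices: they are $S_3$-like crowns, which contradicts Theorem~\ref{thm:lattice-characterization}(2).

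Hence $\beta_1,\beta_2,\beta_3$ are independent, so $\Theta$ is surjective, and every $\varphi$ is of the form $\theta_\zeta$. If the reflection-subgroup step is delicate, my fallback is a direct count. Dyer's formula gives $\dim V=-[q^{2}]R_{x,y}=3$, while the image of $\Theta$ restricted to the span of the labels has dimension equal to the rank of the label span. I would then show that the label span has rank $3$ by evaluating $R$-polynomials of dihedral type.
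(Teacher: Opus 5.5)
Your proposal is correct and follows the paper's proof. You reduce, via the chain isomorphism $V_{x,y}\cong\R^n$ of Lemma~\ref{lem:lattice-bruhat-dimension}, to linear independence of the left coroot labels along one maximal chain; for $n=3$ you rule out a $2$-dimensional span because the interval would then sit inside a dihedral Bruhat order, where length-three intervals are $2$-crowns. Your explicit rank-two argument fills in what the paper calls trivial, and the only imprecision is your phrasing of Dyer's reduction: what is actually needed is that all labels lie in the span (which follows from the diamond relation of Lemma~\ref{lem:rank-2-positivity} together with diamond flips), and that $[x,y]$ is then isomorphic to some interval of a dihedral group, not necessarily one of the form $[e,y']$.
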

\begin{proof}
Put $n=\ell(x,y)$ and choose a maximal chain $x=x_0\lessdot x_1\lessdot\cdots\lessdot x_n=y$, and let $\beta_i$ be the positive left coroot label of $x_{i-1}\lessdot x_i$. We claim that $\beta_1,\ldots,\beta_n$ are linearly independent. There is nothing to do if $n\leq2$ so assume $n=3$. If $\beta_1,\beta_2,\beta_3$ are linearly dependent, then their span $V$ has dimension $2$ and their corresponding reflections generate a dihedral subgroup. All the left coroot labels in $[x,y]$ are in the span $V$ since the maximal chains are connected by diamond flips. The whole interval $[x,y]$ must be isomorphic to a subinterval of the Bruhat order of a dihedral group (see for example \cite[Theorem~1.4]{Dyer-Bruhat-graph}), which has to be a $2$-crown since $\ell(x,y)=3$, contradicting $[x,y]$ being a lattice.

Since $\beta_1,\ldots,\beta_n$ are linearly independent, we can choose $\zeta\in\mathfrak{h}^*$ so that $\zeta(\beta_i)=\lambda_{x_{i-1},x_i}(\varphi)$ for all $i$. Define $\theta_{\zeta}\in V_{x,y}$ by $\theta_{\zeta}(w)=-w^{-1}\zeta$. As in the calculation for $\theta$, we have $\theta_{\zeta}(wt)-\theta_\zeta(w)=\zeta(w\alpha_t^{\vee})\alpha_t$. Therefore, $\theta_{\zeta}$ and $\varphi$ agree on this maximal chain, and thus agree everywhere.
\end{proof}

\begin{lemma}\label{lem:sublevel-connectivity}
Let $[x,y]$ be a lattice Bruhat interval. For every $\varphi\in V_{x,y}$ and $t\in\R$, the subgraphs of $A(x,y)$ induced by
\[
\{a\in\At(x,y):\lambda_a(\varphi)/\lambda_a(\theta)<t\}\ \text{and}\ \{a\in\At(x,y):\lambda_a(\varphi)/\lambda_a(\theta)\leq t\}
\]
are connected whenever nonempty.
\end{lemma}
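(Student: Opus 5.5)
We argue by induction on $n=\ell(x,y)$ and use the lattice structure through joins. Write $r(a):=\lambda_a(\varphi)/\lambda_a(\theta)$ for $a\in\At(x,y)$. Two facts make the induction possible. First, for $x\le z\le y$, restriction gives maps $V_{x,y}\to V_{x,z}$ that commute with the $\lambda$'s, so $r$ restricts to the corresponding function for $[x,z]$. Second, $[x,z]$ is again a lattice Bruhat interval and $A(x,z)$ is an induced subgraph of $A(x,y)$, because a common cover $z'$ of two atoms lies in $[x,y]$. Hence it suffices to prove the following statement, which covers both the strict and non-strict sublevel sets: if $m$ is an atom minimizing $r$ and $a$ is any atom, then $a$ and $m$ are joined in $A(x,y)$ by a path on which $r\le r(a)$. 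Taking such paths to $m$ from every atom of a sublevel set connects that set.

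For the base case we treat $n\le 3$. For $n\le 2$ the atom graph is a point or an edge, so there is nothing to prove. For $n=3$ we use Lemma~\ref{lem:short-ordinary-sections} and write $\varphi=\theta_\zeta$. Then $r(a)=\zeta(\beta_a)/\eta(\beta_a)$, where $\beta_a$ is the left coroot label of $x\lessdot a$. A rank-$3$ lattice interval is the face lattice of a polygon, so $A(x,y)$ is a cycle. The normalized labels $\beta_a/\eta(\beta_a)$ lie in the affine plane $\{\eta=1\}$ of their $3$-dimensional span, which is $3$-dimensional by the proof of Lemma~\ref{lem:short-ordinary-sections}. We need these points to be in convex position, with cyclic order equal to the cycle order of $A(x,y)$. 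To get this, we apply Lemma~\ref{lem:rank-2-positivity} to each diamond $[x,z]$: it expresses one atom label as $\kappa$ times the other plus a positive multiple of a label outside the plane spanned by the pair. This shows that consecutive atoms span edges of $\mathrm{cone}\{\beta_a\}$ with all other labels strictly on one side. Since $r$ is the restriction of the linear functional $\zeta$ to the vertices of a convex polygon, listed in cyclic order, $r$ is unimodal along the cycle. Its sublevel sets are therefore arcs, and arcs are connected.

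For the inductive step with $n\ge4$, we let $m$ minimize $r$, take an atom $a$, and set $d:=a\vee m$, which exists because $[x,y]$ is a lattice. If $d<y$, induction applied to $[x,d]$ gives a path from $a$ to $m$ inside $A(x,d)$. On that path $r\le r(a)$, since $m$ is also the minimizer in $[x,d]$. If $d=y$, we pick a coatom $c\ge a$ and let $m_c$ minimize $r$ over $\At(x,c)$. Induction in $[x,c]$ joins $a$ to $m_c$ within $\{r\le r(a)\}$. If $r(m_c)<r(a)$, we replace $a$ by $m_c$ and repeat. Each repetition strictly lowers $r$ and there are finitely many atoms, so the process terminates either at $m$ or in the previous case $d<y$.

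The main obstacle is the remaining case: $a$ minimizes $r$ over $\At(x,c)$ for every coatom $c\ge a$, yet $r(a)>r(m)$. We must rule this out, that is, show that a local minimum of $r$ is a global minimum. The plan is to run the same reduction dually on the coatoms above $a$. We choose $c\ge a$ and $c'\ge m$, and connect them through a chain of coatoms in which consecutive ones share a rank-$(n-2)$ element. This is possible by diamond-flip connectivity of maximal chains, applied to the dual interval. We then look at the rank-$3$ intervals $[x,w]$ with $a\le w$ that occur along this chain. In each such interval the unimodality from the base case forces a neighbor of $a$ with $r$ at most $r(a)$. To make this neighbor strictly smaller, we would first perturb $\varphi$ generically and then pass to the limit, which is harmless for the non-strict sublevel sets. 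This step is where the real geometric input (Lemma~\ref{lem:rank-2-positivity}, via the rank-$3$ convexity) has to be used, and it is the part I am least sure will go through as stated.
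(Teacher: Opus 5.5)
Your reduction to paths ending at a global minimizer $m$ is fine, and so is the case $a\vee m<y$. The case you call the main obstacle, however, is the whole content of the lemma, and it is left open.

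\textbf{The inductive step.} Your sketched plan cannot close this case. If $a$ minimizes $r$ on $\At(x,c)$ for every coatom $c\ge a$, then $a$ minimizes $r$ on $\At(x,w)$ for every $a\le w<y$, because every such $w$ lies below a coatom. So unimodality in the rank-$3$ intervals $[x,w]$ with $w\ge a$ never produces a neighbor with smaller value: there $a$ is already the minimum. The data your induction uses (values of $r$ on atoms, plus connectivity of sublevel sets in the lower intervals $[x,z]$, $z<y$) cannot rule this case out even in principle. Take the face lattice of an octahedron, whose atoms are the $6$ vertices and whose coatoms are the $8$ triangles. Put $r=0$ and $r=1/2$ at two antipodal vertices and larger values on the equator. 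Every proper lower interval has a complete atom graph, so your inductive hypothesis holds there, yet $\{r\le 1/2\}$ is disconnected; this is exactly your obstacle configuration.

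What is missing is global information from the labels on all cover edges, not just those at $x$. The paper supplies it as follows.
\begin{enumerate}
\item For generic $\varphi$, label every cover edge $e$ by $\lambda_e(\varphi)/\lambda_e(\theta)$.
\item Via Lemma~\ref{lem:short-ordinary-sections} and Dyer's reflection orders, every subinterval of rank at most $3$ has a unique increasing maximal chain, and it is lexicographically first.
\item Descent flips therefore terminate and are locally confluent. Newman's lemma and diamond-flip connectivity of maximal chains then give a single terminal chain for all of $[x,y]$.
\item Its first atom is reached from every atom along a path in $A(x,y)$ on which $r$ strictly decreases. A bottom flip replaces $a$ by the other atom $b$ of $[x,w]$ exactly when $r(b)<r(a)$.
\end{enumerate}
This is what excludes non-global local minima.

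\textbf{The base case $n=3$.} Here too there is an unjustified step. Lemma~\ref{lem:rank-2-positivity} on $[x,z]$ gives $\beta_b=\kappa\beta_a+\tau\gamma$, where $\gamma$ is the label of $a\lessdot z$. This $\gamma$ lies in the span of $\beta_a,\beta_b$, not outside it. Moreover, a relation inside one diamond says nothing about where the remaining atom labels lie. Your convex-position claim is the rank-$3$ case of Theorem~\ref{thm:main}, which the paper deduces from the present lemma through Lemma~\ref{lem:positivity-main}, so it needs an independent proof.

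The rank-$3$ case can be repaired with the same tool. Write $r_{a,w}=\lambda_{a,w}(\varphi)/\lambda_{a,w}(\theta)$. Lemma~\ref{lem:rank-2-positivity} gives $r(b)-r(a)=\mu\,(r_{a,w}-r(a))$ with $\mu>0$ for the diamond $[x,w]=\{x,a,b,w\}$. Hence a strict local minimum of $r$ on the cycle $A(x,y)$ starts an increasing chain $x\lessdot a\lessdot w\lessdot y$. Uniqueness of the increasing chain then forces a unique local minimum, hence unimodality.

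\textbf{Ties.} Your case analysis also omits the case $r(a)=r(m)$ with $a\neq m$ and $a\vee m=y$. Handle ties not by passing to a limit but by the paper's device: choose a threshold strictly between the selected and unselected values, which leaves the sublevel set unchanged under a small generic perturbation.
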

\begin{proof}
Call $\varphi$ \emph{generic} if $\lambda_e(\varphi)/\lambda_e(\theta)\neq\lambda_{e'}(\varphi)/\lambda_{e'}(\theta)$ whenever $\lambda_e/\lambda_e(\theta)$ and $\lambda_{e'}/\lambda_{e'}(\theta)$ are distinct functionals. Note that when the left coroot labels $\beta_e$ and $\beta_{e'}$ are distinct, then they indeed provide distinct functionals. Such generic sections form a dense open subset of $V_{x,y}$, since only finitely many proper hyperplanes are excluded. We first assume that $\varphi$ is generic, and label each cover edge $e$ by $\lambda_e(\varphi)/\lambda_e(\theta)\in\R$, reading chains from bottom to top. We now show that each subinterval of rank at most $3$ has a unique strictly increasing maximal chain, whose label sequence is lexicographically minimal among all maximal chains in that subinterval.

For a subinterval of rank at most $3$, by Lemma~\ref{lem:short-ordinary-sections}, $\varphi$ is represented by $w\mapsto -w^{-1}\zeta$ for some $\zeta\in\mathfrak{h}^*$ locally. Consequently, the label of an edge $e$ in this subinterval is $\zeta(\beta_e)/\eta(\beta_e)$. Order the positive coroots by these ratios, breaking ties lexicographically using fixed linear coordinates of $\beta/\eta(\beta)$. Since normalization by $\eta$ turns positive linear combinations into convex combinations, this is a reflection order by \cite[Section~2.2]{Dyer-Hecke}. Genericity ensures that its comparisons on distinct coroots occurring in the subinterval agree with the numerical labels. The assertion therefore follows from \cite[Proposition~4.3]{Dyer-Hecke},
using inversion to pass to left labels. 

We then analyze maximal chains in $[x,y]$ with the numerical labels. A \emph{descent flip} replaces a decreasing two-edge segment of a maximal chain by the other side of its diamond. Such a flip strictly decreases the label sequence lexicographically. Since there are finitely many maximal chains, every sequence of descent flips terminates. Successive edges have distinct left coroot labels and hence distinct numerical labels, so a terminal chain is strictly increasing. We claim that the terminal chain is independent of all choices. Suppose two descent flips are available at the same chain. If their two-edge segments do not overlap in an edge, the flips commute. Otherwise, both segments lie in a rank-three subinterval. After either first flip, further descent flips within that subinterval terminate at its unique increasing maximal chain, while leaving the rest unchanged. Thus the two resulting chains have a common descendant. Since descent flips terminate and are locally confluent, Newman's lemma \cite[Theorem~3]{Newman1942} (also called Rome's lemma) implies that every maximal chain has a unique terminal chain. Finally, maximal chains are connected by diamond flips, and each diamond flip is a descent flip in one direction. Therefore all maximal chains have the same terminal chain.

We can now prove the connectivity in the original lemma statement. Let $a_0$ be the first atom of the common terminal chain that is strictly increasing. Starting from a maximal chain through any atom $a$, perform descent flips and record the atoms inside. Each flip either leaves the atom unchanged or replaces it by an adjacent atom with strictly smaller height. This gives a path from $a$ to $a_0$ in the atom graph $A(x,y)$ along which $\lambda_a(\varphi)/\lambda_a(\theta)$ strictly decreases. Thus, each of the nonempty sets in the lemma statement contains $a_0$, and each of its vertices is connected to $a_0$ within the set.

Finally, let $\varphi$ be arbitrary and consider either selected set of atoms. The empty and full sets cause no difficulty. Otherwise, every selected height is strictly smaller than every unselected height. Choose a threshold strictly between these two finite sets of heights, and perturb $\varphi$ to a sufficiently close generic section.
\end{proof}

We are now ready to establish the main technical lemma.

\begin{proof}[Proof of Lemma~\ref{lem:positivity-main}]
By construction, $\lambda_{w,wt}(f^c)=0$ if $w<wt\leq c$. In particular, $\lambda_a(f^c)=0$ for $a\leq c$. The interesting part is showing $\lambda_a(f^c)>0$ for $a\nleq c$.

We induct on $n=\ell(x,y)$. The base case $n=2$ follows from Lemma~\ref{lem:rank-2-positivity}. 

Consider an edge $a\sim b$ of $A(x,y)$ with $a\nleq c$ and $b\leq c$, and put $z=a\vee b$. Then $[x,z]=\{x,a,b,z\}$ is a diamond.  Since $z\in\At(b,y)$ and $z\nleq c$, the induction hypothesis applied to the restriction of $f^c$ to $[b,y]$ gives $\lambda_{b,z}(f^c)>0$. Lemma~\ref{lem:rank-2-positivity} therefore implies
\[
\lambda_a(f^c)=\kappa\lambda_b(f^c)+\tau\lambda_{b,z}(f^c)=\tau\lambda_{b,z}(f^c)>0,\qquad \text{for }\tau>0.
\]
Thus every edge leaving the atoms below $c$ has a strictly positive outside endpoint.

Now consider the set
\[
S_c=\{a\in\At(x,y):\lambda_a(f^c)\leq0\}.
\]
Since normalizing by $\lambda_a(\theta)>0$ does not change the sign, Lemma~\ref{lem:sublevel-connectivity} says that its induced subgraph is connected. It contains the nonempty set of atoms below $c$. If it also contained an atom outside $c$, a path within $S_c$ starting below $c$ would have a first edge leaving $c$. Its outside endpoint has strictly positive evaluation by the preceding paragraph, a contradiction. Hence $S_c=\{b\in\At(x,y):b\leq c\}$, proving the desired strict positivity.
\end{proof}

\subsection{The face structure}
In our construction, the pointed cone $C_{x,y}$ lives in $V_{x,y}^*$ so its supporting vectors live in $V_{x,y}$.  Recall from Section~\ref{sub:normal-fan} that a supporting vector $f$ of a pointed cone $C$ needs to satisfy $f(v)\geq0$ for all $v\in C$, hence the importance of Lemma~\ref{lem:positivity-main}.
\begin{proof}[Proof of Theorem~\ref{thm:main}]
For a lattice Bruhat interval $[x,y]$, we show that the complete face lattice of $C_{x,y}$, which is also the complete face lattice of $L_{x,y}$, is isomorphic to $[x,y]$.

Note that $[x,y]$ is atomic, i.e. every element is the join of atoms below it. This is because every $z\in[x,y]$ with $\ell(x,z)\geq2$ has at least two elements $z_1$ and $z_2$ covered by $z$ so that $z=z_1\vee z_2$ and then we can apply induction to $z_1$ and $z_2$. Similarly, $[x,y]$ is coatomic, meaning that every element is the meet of coatoms above it.

For $z\in[x,y]$, put $K_z:=\mathrm{cone}\{\lambda_a:a\in\At(x,y),a\leq z\}$, so that $K_y=C_{x,y}$. We show 
\[K_z=C_{x,y}\cap\bigcap_{c\in\Co(x,y),c\geq z}\ker(f^c).\]
The $\subseteq$ direction follows from $\lambda_a\in\ker(f^c)$ (Lemma~\ref{lem:positivity-main}) for $a\le z\leq c$ where $a\in\At(x,y)$ and $c\in\Co(x,y)$. For the $\supseteq$ direction, take an element $\sum_{a\in\At(x,y)}c_a\lambda_a$, $c_a\in\R_{\geq0}$ on the right hand side. In order for it to be in $\ker(f^c)$ for a fixed coatom $c\geq z$, by Lemma~\ref{lem:positivity-main}, we must have that $c_b=0$ if $b\nleq c$. Any atom $a$ with $c_a>0$ must lie below every coatom $c\geq z$, and hence below $z=\bigwedge_{c\in\Co(x,y),c\geq z}c$ by coatomicity. 

Thus the $K_z$'s are faces of $C_{x,y}$. Clearly for $z\leq z'$, $K_z\subseteq K_{z'}$. Also, if $z\lessdot z'$, atomicity implies that $z'=z\vee a$ for some atom $a\nleq z$. Coatomicity says that there exists some coatom $c\ngeq z'$ such that $z=z'\wedge c$. Then $a\nleq c$ and $\lambda_a\notin\ker(f^c)$ which means that $\lambda_a\in K_{z'}\setminus K_z$, $K_z\subsetneq K_{z'}$. At the same time, $K_{z}=K_{z'}\cap\ker(f^c)$ so $\dim(K_{z'})-\dim(K_z)\geq1$. Together with $V_{x,y}^*$ having dimension $\ell(x,y)$, we deduce that $\dim(K_z)=\ell(x,z)$ for all $z\in[x,y]$. We calculate that 
\[K_{z_1}\cap K_{z_2}=C_{x,y}\cap\bigcap_{c\in\Co(x,y),\ c\geq z_1\text{ or }c\geq z_2}\ker(f^c)=K_{z}\]
where $z=\bigwedge_{c\in\Co(x,y),\ c\geq z_1\text{ or }c\geq z_2}c=\left(\bigwedge_{c\in\Co(x,y),c\geq z_1}c\right)\wedge\left(\bigwedge_{c\in\Co(x,y),c\geq z_2}c\right)=z_1\wedge z_2$. In particular, the equation $K_{z_1}\cap K_{z_2}=K_{z_1\wedge z_2}$ implies that all the $K_z$'s are distinct faces, by dimensional considerations. Then if $K_{z}\subseteq K_{z'}$, $K_z=K_z\cap K_{z'}=K_{z\wedge z'}$, $z=z\wedge z'$ so $z\leq z'$. Thus, $z\mapsto K_z$ is an order embedding from $[x,y]$ to $\mathcal{F}(C_{x,y})$.

Finally, there are no additional faces. It is a well-known fact that all maximal chains in the complete face lattice of a polytope are connected by diamond flips. See for example \cite{Bayer-Barycentric}, or use Balinski's theorem \cite[Theorem~3.14]{Ziegler} to establish that the facet-adjacency graph is connected and apply induction. Starting from any maximal chain $K_{z_0}\subsetneq K_{z_1}\subsetneq\cdots\subsetneq K_{z_n}$, all possible diamond flips can involve only some other known face $K_{z'}$, as all rank $2$ Bruhat intervals are diamonds. Therefore, all faces of $C_{x,y}$ are known.
\end{proof}

\section{Toric realizations and Richardson varieties}\label{sec:geometry}
Throughout this section, $G$ is a connected complex semisimple algebraic group, $B$ and $B^{-}$ are opposite Borel subgroups, $T:=B\cap B^{-}$ is a maximal torus, and $W$ is the Weyl group. We write $w_0$ for the longest element of $W$ and $\dot wB$ for the $T$-fixed point of the flag variety $G/B$ indexed by $w\in W$.  Let 
\[
 C_w=B\dot wB/B,\qquad C^w=B^{-}\dot wB/B, \qquad X_w=\overline{C_w},\qquad X^w=\overline{C^w},
\]
be the Schubert cell, opposite Schubert cell, Schubert variety, and opposite Schubert variety, respectively. For $x\leq y$ in the Bruhat order, recall we have the open and closed Richardson varieties
\[
 R_{x,y}=C^x\cap C_y, \qquad \overline{R}_{x,y}=X^x\cap X_y.
\]
They have complex dimension $\ell(x,y)$, and 
\begin{equation*}
 \overline{R}_{x,y} =\bigsqcup_{x\leq a\leq b\leq y}R_{a,b}.
\end{equation*}

We use rational coefficients for intersection cohomology. 

\subsection{The toric Richardson construction}
We begin with the geometric result due to Gorsky--Kim--Sherman-Bennett in type A \cite[Theorems~1.1 and~1.2]{GorskyKimShermanBennett} and Bossinger--Gorsky--Simental in general type \cite[Theorems~6.2 and~6.6]{BossingerGorskySimental2026}.
\begin{proposition}\label{prop:toric-Richardson}
For $x\leq y$ in a finite Weyl group, the following are equivalent:
\begin{enumerate}
\item $[x,y]$ is a lattice;
\item $R_{x,y}$ is isomorphic to an algebraic torus;
\item $\overline{R}_{x,y}$ is a toric variety.
\end{enumerate}
When these conditions hold, the torus structure on $R_{x,y}$ extends to $\overline{R}_{x,y}$, and the torus orbits in $\overline{R}_{x,y}$ are the Richardson strata $R_{a,b}$ for $x\le a\le b\le y$. Consequently, the complete face lattice of  the moment polytope $Q_{x,y}$ of $\overline{R}_{x,y}$ is given by 
\begin{equation*}
 \mathcal F(Q_{x,y})\cong\operatorname{Int}([x,y]) :=\{\varnothing\}\sqcup\{[a,b]:x\leq a\leq b\leq y\},
\end{equation*}
ordered by containment. The face indexed by $[a,b]$ has dimension $\ell(a,b)$.
\end{proposition}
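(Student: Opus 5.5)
The proposition is attributed to Gorsky--Kim--Sherman-Bennett and Bossinger--Gorsky--Simental, so my plan is to assemble it from their results together with the combinatorics of Section~\ref{sec:polytope}, rather than rebuild the geometry from scratch.

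\textbf{(1)$\Leftrightarrow$(2).} I would use the Deodhar decomposition of $R_{x,y}$ along a reduced word for $y$. Its cells are products $(\mathbb C^*)^{a}\times\mathbb C^{b}$, indexed by distinguished subexpressions for $x$. The number of points of $R_{x,y}$ over $\mathbb F_q$ equals $R_{x,y}(q)$. By Theorem~\ref{thm:lattice-characterization}(4), being a lattice is equivalent to $R_{x,y}(q)=(q-1)^{\ell(x,y)}$.
\begin{itemize}
\item If $R_{x,y}$ is a torus, its point count is $(q-1)^{\ell(x,y)}$, so $[x,y]$ is a lattice.
\item Conversely, the point count $(q-1)^{\ell(x,y)}$ forces a single Deodhar cell, which is then a torus of dimension $\ell(x,y)$.
\end{itemize}
To make the converse rigorous, I would compare the leading terms $q^{\ell(x,y)}$: every cell has dimension at most $\ell(x,y)$, and only the positive (open) cell contributes in top degree.

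\textbf{Main obstacle: (2)$\Rightarrow$(3) and the orbit structure.} One must extend the torus action on $R_{x,y}$ to $\overline{R}_{x,y}$ so that its orbits are exactly the strata $R_{a,b}$. This is not the action of $T$. I would cite \cite[Theorems~6.2 and~6.6]{BossingerGorskySimental2026}, which build the action from cluster/Deodhar torus coordinates and show it extends. The fact that each stratum is a single orbit then follows from (1)$\Leftrightarrow$(2) applied to each subinterval $[a,b]$, which is again a lattice.

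\textbf{(3)$\Rightarrow$(1).} A toric variety has finitely many torus orbits. If $\overline{R}_{x,y}$ is toric, its dense open orbit must be the open stratum $R_{x,y}$, which is then a torus, giving (2) and hence (1).

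\textbf{Face lattice of $Q_{x,y}$.} $\overline{R}_{x,y}$ is projective, and I take it to be normal (a known fact). Torus-orbit closures of a projective toric variety correspond inclusion-preservingly to faces of the moment polytope, with dimensions matching. The closure of $R_{a,b}$ is $\overline{R}_{a,b}$, and $\overline{R}_{a,b}\subseteq\overline{R}_{a',b'}$ exactly when $[a,b]\subseteq[a',b']$. Adjoining the empty face gives $\mathcal F(Q_{x,y})\cong\operatorname{Int}([x,y])$, and the face indexed by $[a,b]$ has dimension $\dim R_{a,b}=\ell(a,b)$.
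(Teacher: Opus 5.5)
The paper does not prove this proposition itself. The equivalences, the extension of the torus action, and the identification of orbits with Richardson strata are all quoted from \cite[Theorems~1.1 and~1.2]{GorskyKimShermanBennett} and \cite[Theorems~6.2 and~6.6]{BossingerGorskySimental2026}. Parts of your assembly are fine, but your independent argument for (3)$\Rightarrow$(1) fails.

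The claim that the dense open orbit ``must be the open stratum $R_{x,y}$'' is false. As the paper emphasizes, toric means toric for some effective torus of dimension $\ell(x,y)$, with no a priori relation to $T$ or to the Richardson stratification. Two dense open subsets of an irreducible variety need not coincide. Already for $\overline{R}_{e,s}=\mathbb{P}^1$ in type $A_1$, conjugate the standard $\mathbb{C}^*$-action by $z\mapsto z+1$. This gives a toric structure whose open orbit is $\mathbb{P}^1\setminus\{1,\infty\}$, not $R_{e,s}=\mathbb{P}^1\setminus\{0,\infty\}$. So a toric structure on $\overline{R}_{x,y}$ tells you nothing directly about $R_{x,y}$, and you cannot reduce to (2). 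To close this you must either cite this implication too, as the paper does, or find an invariant of $\overline{R}_{x,y}$ that rules out every toric structure when $[x,y]$ is not a lattice. That requires a real argument, not a formality.

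There is a similar non sequitur in the orbit statement. From (1)$\Leftrightarrow$(2) you know each $R_{a,b}$ is abstractly isomorphic to $(\mathbb{C}^*)^{\ell(a,b)}$. This does not show that the extended torus preserves $R_{a,b}$, let alone acts transitively on it. That compatibility has to come from the construction in \cite{BossingerGorskySimental2026}, which is what the paper relies on.

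In your Deodhar argument for (1)$\Rightarrow$(2), comparing top-degree terms proves nothing. For every interval, the positive cell alone reaches degree $\ell(x,y)$. The correct finish is that each non-positive distinguished subexpression contributes $(q-1)^{a}q^{b}$ with $b\geq 1$. This is positive at $q=2$, so no such term can occur.

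Granting the orbit statement, your derivation of $\mathcal F(Q_{x,y})\cong\operatorname{Int}([x,y])$ and of the face dimensions is correct. It rests on normality of Richardson varieties and the orbit--face correspondence.
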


Here toric means toric for some effective algebraic torus of the appropriate dimension. It does not require the dense orbit to be an orbit of the maximal torus $T$. In particular,  $\ell(x,y)$ can be larger than the rank of $G$. 

Assume from now on that $x<y$ and $[x,y]$ is a lattice, and let $n=\ell(x,y)$. Let $q_x$ be the vertex of $Q_{x,y}$ indexed by the degenerate interval $[x,x]$. Choose a rational supporting vector $\lambda$ of $q_x$ and a sufficiently small rational $\varepsilon>0$. Let
\begin{equation*}
 \Pi_{x,y}:=\mathrm{VF}_{q_x}(Q_{x,y})
\end{equation*}
be the vertex figure of $Q_{x,y}$ at $q_x$. Geometrically, $\Pi_{x, y}$ is the new facet created by cutting off the vertex $q_x$. We use $\Pi_{x,y}$ for this geometric realization to distinguish it from the root-section realization $L_{x,y}$ of Section~\ref{sec:polytope}; their face lattices agree, but we do not require them to be affinely isomorphic.

\begin{corollary}\label{cor:geometric-realization}
The complete face lattice of the rational $(n-1)$-polytope $\Pi_{x,y}$ is $[x,y]$. Furthermore, there is a projective toric $(n-1)$-fold $X_{x,y}$ with moment polytope $\Pi_{x,y}$ and
\[
X_{x,y}=\bigsqcup_{x<w\leq y}O_w,\qquad O_w\subseteq\overline{O_z}\ \Longleftrightarrow\ w\leq z, \qquad \dim O_w=\ell(x,w)-1.
\]
Thus $[x,y]$ is the poset of torus-orbit closures in $X_{x,y}$, with the empty set adjoined. For $n\geq2$, $X_{x,y}$ is the new invariant divisor in a projective toric modification of $\overline{R}_{x,y}$.
\end{corollary}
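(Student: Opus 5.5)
The plan is to read everything off Proposition~\ref{prop:toric-Richardson} together with the standard dictionary between polytopes and projective toric varieties. First, the face lattice of $\Pi_{x,y}$. By Section~\ref{sub:normal-fan}, the complete face lattice of the vertex figure $\VF_{q_x}(Q_{x,y})$ is the principal order filter of $\mathcal F(Q_{x,y})$ generated by the vertex $q_x$. Under the isomorphism $\mathcal F(Q_{x,y})\cong\operatorname{Int}([x,y])$, this filter consists of the intervals $[a,b]\supseteq[x,x]$, that is, the intervals $[x,b]$ with $x\le b\le y$. These are ordered by containment, which means ordered by $b$ in Bruhat order, so $b\mapsto[x,b]$ identifies the filter with $[x,y]$. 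Under this identification $[x,x]$ corresponds to the empty face of $\Pi_{x,y}$ and $[x,y]$ to the whole polytope. Since the face $[x,b]$ has dimension $\ell(x,b)$, the corresponding face of $\Pi_{x,y}$ has dimension $\ell(x,b)-1$, and in particular $\dim\Pi_{x,y}=n-1$. Rationality is automatic: $Q_{x,y}$ is a lattice polytope (the moment polytope of a projective toric variety), and both $\lambda$ and $\varepsilon$ are rational, so the cutting hyperplane is rational.

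Next, the variety $X_{x,y}$. I would scale $\Pi_{x,y}$ by a positive integer so that it becomes a lattice polytope in the affine lattice of its hyperplane, and let $X_{x,y}$ be the associated projective toric variety. The orbit--face correspondence then gives a torus orbit $O_F$ of dimension $\dim F$ for each nonempty face $F$, with $O_F\subseteq\overline{O_{F'}}$ exactly when $F\subseteq F'$. Writing $O_w$ for the orbit of the face indexed by $w$, with $x<w\le y$, gives the stated decomposition, the closure relation and the dimension formula. Adjoining the empty set recovers $[x,y]$.

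For the last claim, assume $n\ge2$. Cutting off the vertex $q_x$ means replacing $Q_{x,y}$ by $Q'=Q_{x,y}\cap\{\lambda\le\lambda(q_x)-\varepsilon\}$ (after scaling). For $\varepsilon$ small, the faces of $Q'$ are the faces of $Q_{x,y}$ other than $q_x$, together with their intersections with the new facet $\Pi_{x,y}$. The normal fan of $Q'$ is therefore a refinement of that of $Q_{x,y}$, obtained by adding the ray spanned by $-\lambda$ inside the maximal cone at $q_x$; this is a (weighted) star subdivision. Consequently $X_{Q'}\to\overline{R}_{x,y}$ is a projective toric birational morphism, which is an isomorphism away from the fixed point $\dot xB$. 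Its new invariant divisor is the orbit closure attached to the facet $\Pi_{x,y}$, and as a toric variety this is exactly the one whose polytope is that facet, namely $X_{x,y}$.

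The main point requiring care is that $q_x$ is genuinely a simple enough vertex for the local picture, and that the vertex figure's face lattice really is the filter. The second is the general fact quoted in Section~\ref{sub:normal-fan}, so the only real obstacle is bookkeeping: identifying the normal fan of the truncation with a star subdivision, and the facet's toric variety with the exceptional divisor. This is standard toric geometry, for which I would cite the orbit--cone correspondence.
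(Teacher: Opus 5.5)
Your proposal is correct and follows essentially the same route as the paper: identify the faces of $Q_{x,y}$ containing $q_x$ with $\{[x,w]:x\le w\le y\}\cong[x,y]$, dilate $\Pi_{x,y}$ to a lattice polytope and apply the orbit--cone correspondence, and obtain $X_{x,y}$ as the new invariant divisor of the truncation $Q'_{x,y}=Q_{x,y}\cap\{\lambda\le\lambda(q_x)-\varepsilon\}$, whose normal fan refines that of $Q_{x,y}$. Your added details are correct: the refinement is the star subdivision of the cone at $q_x$ along $-\lambda$ in the inward-normal convention, and the modification is an isomorphism away from $\dot xB$. Your worry about $q_x$ being ``simple enough'' is unnecessary, since vertex figures and truncations work at any vertex and $q_x$ is simple only when $[x,y]$ is Boolean.
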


\begin{proof}
A face $[a,b]$ of $Q_{x,y}$ satisfies $x\le a\le b\le y$. It contains $q_x=[x, x]$ precisely when $a\leq x\leq b$, hence precisely when $a=x$. The interval of faces containing $q_x$ is therefore
\[
 \{[x,w]:x\leq w\leq y\}\cong[x,y].
\]
Taking the vertex figure identifies $q_x$ with the empty face of $\Pi_{x,y}$, and the face $[x,w]$ of $Q_{x,y}$, for $w>x$, with a face of $\Pi_{x,y}$ of dimension $\ell(x,w)-1$. This proves the first assertion; compare \cite[Corollary~6.8]{BossingerGorskySimental2026}.

Choose a positive integral dilation making $\Pi_{x,y}$ a lattice polytope. Its normal fan defines a projective toric variety, and the assertions about its orbits follow from the orbit-cone correspondence \cite[Theorem 3.2.6]{CLS2011}. For $n\geq2$, truncate $Q_{x,y}$ at $q_x$ by setting
\[
 Q'_{x,y}=Q_{x,y}\cap \{\lambda\leq\lambda(q_x)-\varepsilon\}.
\]
The normal fan of $Q'_{x,y}$ refines the normal fan of $Q_{x,y}$. The resulting projective equivariant toric modification 
$$X_{Q'_{x,y}}\longrightarrow X_{Q_{x,y}}=\overline{R}_{x,y}$$
creates a new torus-invariant divisor $X_{x,y}$ whose moment polytope is exactly $\Pi_{x,y}$ (see the constructions of projective toric varieties and fan refinements in \cite{CLS2011}). For $n=1$, the variety $X_{x,y}$ is a point; no exceptional divisor construction is needed.
\end{proof}

\subsection{Local and global intersection cohomology of toric Richardson varieties}
Let $\mathcal L$ be a face lattice of a polytope with rank function $\rho$. Let $f_{A,B}(q)$ denote the right KLS polynomials associated with the Eulerian kernel $\epsilon$, which are characterized by
\begin{equation*}
 \begin{split}
 f_{A,A}(q)&=1,\\
 \deg f_{A,B}(q)&<\frac{\rho(B)-\rho(A)}2 \text{ for } A<B,\\
 q^{\rho(B)-\rho(A)}f_{A,B}(q^{-1})&=\sum_{A\leq D\leq B}(q-1)^{\rho(D)-\rho(A)}f_{D,B}(q),
 \end{split}
\end{equation*}
where the last identity is the expansion of the defining property of the right KLS function from the convolution product of the incidence algebra in Section~\ref{sub:KLS}.

Let $[x,y]$ be a lattice Bruhat interval in a finite Weyl group. By Proposition~\ref{prop:toric-Richardson}, we have the face lattice $\mathcal{F}(Q_{x,y})\cong\operatorname{Int}([x,y])$. We take the rank convention
\begin{equation*}
 \rho(\varnothing)=0,
 \qquad \rho([a,b])=\ell(a,b)+1.
\end{equation*}
Thus a vertex of $Q_{x,y}$ has rank one and the rank difference between two nonempty interval faces of $Q_{x,y}$ is $\ell(c,d)-\ell(a,b)$ when $[a,b]\subseteq[c,d]$. Then we can compute the Poincar\'e
polynomials of the local and global intersection cohomology of the toric closed Richardson variety by the following theorem.

\begin{theorem}\label{thm:Richardson-IH}
Let $[x,y]$ be a lattice Bruhat interval in a finite Weyl group. For $x\leq c\leq a\leq b\leq d\leq y$ and $p\in R_{a,b}$, the odd local intersection cohomology of $\overline{R}_{c,d}$ at $p$ vanishes, and
\begin{equation*}
 \sum_{i\geq0}\dim\mathrm{IH}^{2i}_p(\overline{R}_{c,d})q^i=f_{[a,b],[c,d]}(q)=P_{w_0a,w_0c}(q)P_{b,d}(q).
\end{equation*}
The odd global intersection cohomology of $\overline{R}_{x,y}$ also vanishes, and the corresponding Poincar\'e polynomial can be computed by the toric $h$-polynomial
\begin{align*}
 H_{x,y}(q)&:={\sum}_{i\geq0}\dim\mathrm{IH}^{2i}(\overline{R}_{x,y})q^i\\
 &=\sum_{x\leq a\leq b\leq y}(q-1)^{\ell(a,b)}f_{[a,b],[x,y]}(q)\\
 &=\sum_{x\leq w\leq y}q^{\ell(x,w)}P_{w_0w,w_0x}(q^{-1})P_{w,y}(q).
\end{align*}
\end{theorem}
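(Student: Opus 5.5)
The plan is to combine three ingredients. First, the standard toric fact that the local intersection cohomology of a projective toric variety is computed by Stanley's toric $g$-polynomials, i.e.\ by the right KLS function of the Eulerian kernel on the face lattice of its moment polytope. Second, Proposition~\ref{prop:toric-Richardson}, which identifies that face lattice with $\operatorname{Int}([x,y])$. Third, a product decomposition of intervals in $\operatorname{Int}([x,y])$ that reduces everything to classical Kazhdan--Lusztig polynomials via the coincidence of kernels on lattice Bruhat intervals (Theorem~\ref{thm:lattice-characterization}(4)).

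\textbf{Step 1: local intersection cohomology.} For $x\le c\le d\le y$ the subinterval $[c,d]$ is again a lattice, so $\overline R_{c,d}$ is toric with moment polytope $Q_{c,d}$, a face of $Q_{x,y}$. Its torus orbits are the strata $R_{a,b}$. By the Denef--Loeser/Fieseler theorem, for $p$ in the orbit of the face $[a,b]$ the odd local IH vanishes. The even local IH Poincaré polynomial equals the $g$-polynomial of the polar dual of the face figure $Q_{c,d}/[a,b]$, i.e.\ of the interval $[[a,b],[c,d]]$ in $\mathcal F(Q_{x,y})$. In Stanley's KLS formalism this is exactly the right KLS function $f_{[a,b],[c,d]}$ for the Eulerian kernel; I will be careful about the left/right convention and the rank normalization $\rho$. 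This gives the first equality.

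\textbf{Step 2: the product formula.} The poset interval $\{[a',b']:[a,b]\subseteq[a',b']\subseteq[c,d]\}$ is isomorphic to $[c,a]^{\mathrm{op}}\times[b,d]$ via $[a',b']\mapsto(a',b')$. The rank difference is additive, so the Eulerian kernel factors as a tensor product of the Eulerian kernels on the two factors. The right KLS function of a product kernel is the product of the right KLS functions; this follows from the uniqueness in the defining characterization, since a product of solutions satisfies $\overline f=\kappa f$ and the degree bound. The factor $[b,d]$ is a lattice Bruhat interval, where the Eulerian kernel equals the $R$-kernel by Theorem~\ref{thm:lattice-characterization}(4). Hence its right KLS function is $P_{b,d}$.

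For the factor $[c,a]^{\mathrm{op}}$, I will use that $w\mapsto w_0w$ is an antiautomorphism of Bruhat order, sending $[c,a]$ to $[w_0a,w_0c]$. This interval is again a lattice, and reversing the order converts the right KLS function of $[c,a]^{\mathrm{op}}$ into the right KLS function of $[w_0a,w_0c]$, namely $P_{w_0a,w_0c}$. The point needing care is that $[c,a]^{\mathrm{op}}$ as a poset is just $[w_0a,w_0c]$, so this is a relabeling with no left/right switch. This yields $f_{[a,b],[c,d]}=P_{w_0a,w_0c}P_{b,d}$.

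\textbf{Step 3: global intersection cohomology.} For a projective toric variety, the odd global IH vanishes and the IH Poincaré polynomial is Stanley's toric $h$-polynomial. Written via the Eulerian kernel, this is $\sum_{F}(q-1)^{\rho(F)-1}f_{F,\hat 1}$ over nonempty faces $F$, which gives the stated sum with $(q-1)^{\ell(a,b)}$. I will check the normalization carefully against a cube.

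Substituting Step 2 with $c=x$, $d=y$ gives
\[
\sum_{a\le b}(q-1)^{\ell(a,b)}P_{w_0a,w_0x}(q)P_{b,y}(q).
\]
Summing over $a$ with $b=w$ fixed, the inner sum is $\sum_{x\le a\le w}(q-1)^{\ell(a,w)}P_{w_0a,w_0x}(q)$. By the defining recursion $\overline P=RP$ on the reversed lattice interval $[w_0w,w_0x]$, using $R=(q-1)^{\ell}$, this inner sum equals $q^{\ell(x,w)}P_{w_0w,w_0x}(q^{-1})$, which gives the final formula.

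The main obstacle is the bookkeeping in Steps 1 and 3: matching Stanley's $g$ and $h$ conventions (polar dual versus primal, left versus right KLS, and the rank shift $\rho([a,b])=\ell(a,b)+1$) to the geometric theorems. I will verify each convention on the Boolean case, where every $P$ equals $1$ and $H_{x,y}$ should be $(1+q)^{\ell(x,y)}$.
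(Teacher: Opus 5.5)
Your proof is correct. For the first local equality and for the toric $h$-polynomial expression of $H_{x,y}$ it is the paper's argument: Denef--Loeser/Fieseler combined with Proposition~\ref{prop:toric-Richardson}.

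You take a different route for the two remaining equalities.

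For $f_{[a,b],[c,d]}=P_{w_0a,w_0c}P_{b,d}$, the paper cites the geometric local product theorem for Richardson varieties of Knutson--Woo--Yong. You instead derive it inside the KLS formalism:
\begin{enumerate}
\item The interval between $[a,b]$ and $[c,d]$ in $\operatorname{Int}([x,y])$ is $[c,a]^{\mathrm{op}}\times[b,d]$, with additive rank.
\item The Eulerian kernel therefore factors, and so does the right KLS function, by uniqueness and the strict degree bound.
\item The map $w\mapsto w_0w$ identifies $[c,a]^{\mathrm{op}}$ with the lattice interval $[w_0a,w_0c]$, with no left/right switch, and the Eulerian and $R$-kernels coincide there.
\end{enumerate}

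For the last global formula, the paper cites Proudfoot's Richardson $Z$-polynomial formula. You instead fix $b=w$, sum over $a$, and apply $\overline P=RP$ with $R=(q-1)^{\ell}$ on $[w_0w,w_0x]$. Both computations check out.

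Your route is self-contained. It shows that, given the toric description, these equalities are formal consequences of the lattice property. The paper's route is shorter given the citations. It also places the formulas in the general Richardson setting, where the product formula and the $Z$-polynomial formula hold without the lattice hypothesis. There, the agreement serves as a consistency check between the toric and Schubert-variety pictures.
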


\begin{proof}
The local intersection cohomology of a toric orbit closure in a projective toric variety is computed by the right KLS polynomial of its orbit-closure interval \cite{DenefLoeser1991,Fieseler1991} (see also \cite[Corollary~4.20]{Proudfoot2018}). Applying Proposition~\ref{prop:toric-Richardson} gives the vanishing in odd degrees and the first equality in the local formula. The local product theorem for Richardson varieties \cite[Section~3.1, p.~397]{KnutsonWooYong2013} gives the second equality.

For a projective toric variety with moment polytope $Q$, Stanley's toric $h$-polynomial $h(Q^*,q)$ computes the Poincar\'e polynomial of its global intersection cohomology, in our convention (see \cite{DenefLoeser1991,Fieseler1991}). Applying Proposition~\ref{prop:toric-Richardson} gives the vanishing in odd degrees and the second equality in the global situation. The Poincar\'e polynomial of the global intersection cohomology of any closed Richardson variety $\overline{R}_{x, y}$ (not necessarily toric) can be computed using the classical KL polynomials (see the Richardson $Z$-polynomial formula of \cite[Theorem~4.3 and Remark~4.4]{Proudfoot2018}). This gives the last equality in the global formula.
\end{proof}

\subsection{Rational smoothness of toric Richardson varieties}
Recall that an irreducible $n$-dimensional complex algebraic variety $X$ is \emph{rationally smooth} if for all $x \in X$,
\[
 H^i(X,X\setminus\{x\};\mathbb Q)\cong\begin{cases}\mathbb Q,&i=2n,
 \\0,&i\neq2n.\end{cases}
\]
If $X$ is rationally smooth, then its rational ordinary cohomology is isomorphic to its rational intersection cohomology. If furthermore $X$ is projective, then its rational ordinary cohomology satisfies Poincar\'e duality. 

When the moment polytope $Q_{x,y}$ is simple, the toric closed Richardson variety $\overline{R}_{x, y}$ is rationally smooth. The following theorem gives equivalent criteria for the rational smoothness of the toric Richardson variety $\overline{R}_{x,y}$.

\begin{theorem}\label{thm:rational-smoothness}
Let $[x,y]$ be a lattice Bruhat interval in a finite Weyl group, and let $n=\ell(x,y)$. The following are equivalent:
\begin{enumerate}
\item $\overline{R}_{x,y}$ is rationally smooth;
\item $Q_{x,y}$ is simple;
\item $f_{[x,x],[x,y]}(q)=1$;
\item $f_{[y,y],[x,y]}(q)=1$;
\item $P_{x,y}(q)=1$;
\item $Q_{x,y}$ is combinatorially equivalent to the $n$-cube;
\item $H_{x,y}(q)=(1+q)^n$;
\item $\Pi_{x,y}$ is a simplex;
\item $[x,y]$ is isomorphic to the Boolean lattice $B_n$.
\end{enumerate}
When these conditions hold, $H_{x,y}(q)$ also equals $\sum_i\dim H^{2i}(\overline{R}_{x,y};\mathbb Q)q^i$, and the odd ordinary cohomology vanishes.
\end{theorem}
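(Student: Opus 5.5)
The proof will go around a cycle of implications. Throughout, $\mathcal F(Q_{x,y})\cong\operatorname{Int}([x,y])$ (Proposition~\ref{prop:toric-Richardson}), $\mathcal F(\Pi_{x,y})\cong[x,y]$ (Corollary~\ref{cor:geometric-realization}), and the local formula of Theorem~\ref{thm:Richardson-IH} supply the translations.

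First I handle the polytope side. The filter of $q_x$ in $\mathcal F(Q_{x,y})$ is $\{[x,w]\}\cong[x,y]$. So $q_x$ is a simple vertex iff $[x,y]$ has exactly $n$ atoms, which holds iff $\Pi_{x,y}$ is a simplex, which holds iff $[x,y]\cong B_n$. This gives (8)$\Leftrightarrow$(9).

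The vertex at $[y,y]$ has filter $\{[w,y]\}$, which is the dual of $[x,y]$. The vertex $[a,a]$ has filter $\{[c,d]: c\le a\le d\}\cong [x,a]^{\mathrm{op}}\times[a,y]$. If $[x,y]\cong B_n$, every such product is Boolean of rank $n$, so every vertex is simple and (9)$\Rightarrow$(2).

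Conversely, (2) applied at $q_x$ gives (9). For (6), a Boolean $[x,y]$ makes $\operatorname{Int}(B_n)$ the face lattice of the $n$-cube, since faces of the cube are exactly intervals of $B_n$. Hence (9)$\Leftrightarrow$(6), with (6)$\Rightarrow$(2) trivial.

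Next I handle the Kazhdan--Lusztig side. By Theorem~\ref{thm:Richardson-IH}, $f_{[x,x],[x,y]}=P_{w_0x,w_0x}P_{x,y}=P_{x,y}$, so (3)$\Leftrightarrow$(5). Similarly $f_{[y,y],[x,y]}=P_{w_0y,w_0x}$, so (4) says $P_{w_0y,w_0x}=1$.

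The key input is that $f_{\{v\},Q}$ is the toric $g$-polynomial of the dual of the vertex figure at $v$. This polynomial equals $1$ iff the vertex figure is a simplex. One direction is standard. For the other direction I would use Theorem~\ref{thm:KL-applications}(4) in its polytope form, namely that the Braden--MacPherson/Kalai result says $g=1$ iff the polytope is simplicial, applied to the dual of the vertex figure. The vertex figure at $q_x$ is $\Pi_{x,y}$, so (3)$\Leftrightarrow$(8). The vertex figure at $[y,y]$ has face lattice $[x,y]^{\mathrm{op}}$, so (4)$\Leftrightarrow$ this figure is a simplex $\Leftrightarrow [x,y]^{\mathrm{op}}\cong B_n\Leftrightarrow$(9).

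For (1), local IH at $p$ of a rationally smooth variety is $\mathbb Q$ in degree $0$. Hence (1) gives $f_{[a,b],[x,y]}=1$ for all faces, in particular (3).

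Conversely, (2) gives rational smoothness, because a projective toric variety with simple moment polytope is an orbifold, as noted before the theorem.

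For (7), (6) implies (7) via the global formula of Theorem~\ref{thm:Richardson-IH}, since the toric $h$-polynomial of the cube is $(1+q)^n$. For (7)$\Rightarrow$(9), I compare coefficients of $q^1$. The coefficient $[q]H_{x,y}$ is $f_1-n$ for the toric $h$ of the dual polytope, where $f_1$ counts vertices of $Q_{x,y}$, i.e. $|[x,y]|$. Together with the generalized lower bound, which gives $g_1\ge0$ with equality iff every vertex is simple, this forces the vertex count to be $2^n$ and $Q_{x,y}$ to be simple.

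The step I expect to be hardest is carrying this $h_1$ computation out rigorously in the paper's conventions, where the vertices of $Q_{x,y}$ correspond to facets of $Q^*$. If it proves too delicate, I will use the fallback: $h_1=h_{n-1}$, so $h_1=n$ in $(1+q)^n$ forces the number of facets to equal $2n$, and then the lattice structure forces the Boolean case.

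The final assertion then follows because rationally smooth projective varieties have $IH=H$.
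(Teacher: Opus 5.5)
Your cycle is fine except at (7)$\Rightarrow$(9), where there is a genuine gap. Your primary argument misidentifies the linear coefficient. Since $H_{x,y}=h(Q_{x,y}^*,q)$ and $h_1(P)=f_0(P)-\dim P$, the coefficient $[q]H_{x,y}$ is the number of vertices of $Q_{x,y}^*$, i.e.\ the number of facets of $Q_{x,y}$, minus $n$. It is not $|[x,y]|-n$. Your formula contradicts the theorem itself: for $[x,y]\cong B_3$ it predicts $[q]H_{x,y}=8-3=5$, while $(1+q)^3$ has linear coefficient $3$. The appeal to a lower bound does not repair this. Equality $g_1=0$ holds exactly for simplices, not when every vertex is simple, and nothing there produces a vertex count of $2^n$.

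Your fallback does reach the correct count, that $Q_{x,y}$ has $2n$ facets. But ``the lattice structure forces the Boolean case'' is precisely the step that needs an argument. The missing idea is to read the facets off Proposition~\ref{prop:toric-Richardson}. For $n\geq2$ the facets of $\operatorname{Int}([x,y])$ are exactly the intervals $[x,c]$ with $c\in\Co(x,y)$ and $[a,y]$ with $a\in\At(x,y)$, so $\#\At(x,y)+\#\Co(x,y)=2n$. These two numbers are the vertex and facet counts of the $(n-1)$-polytope $\Pi_{x,y}$, so each is at least $n$. Hence both equal $n$, $\Pi_{x,y}$ is a simplex, and (8), hence (9), follows; the case $n=1$ is trivial.

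Two smaller misstatements should also be fixed. First, in your key input, a polytope has toric $g$-polynomial $1$ iff it is a simplex, not iff it is simplicial (the octahedron has $g=1+2q$). Read literally, ``simplicial'' applied to the dual of the vertex figure would only say the vertex figure is simple. The correct reason is $g_1(P)=f_0(P)-\dim P-1$, as in the proof of Theorem~\ref{thm:KL-applications}(4). With that, your conclusions (3)$\Leftrightarrow$(8) and (4)$\Leftrightarrow$(9) stand. Second, in (6)$\Rightarrow$(7) it is $h(C_n^*,q)$, the toric $h$-polynomial of the cross-polytope, that equals $(1+q)^n$. The toric $h$-polynomial of the $3$-cube itself is $1+5q+5q^2+q^3$.

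The remaining steps are correct, and some differ slightly from the paper's route. Your direct proof of (9)$\Rightarrow$(2) uses the vertex filters $[x,a]^{\mathrm{op}}\times[a,y]$, and your proof of (1)$\Rightarrow$(3) uses that the intersection complex of a rationally smooth variety is constant.
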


\begin{proof}
A toric variety is rationally smooth precisely when its fan is simplicial \cite[Theorem~11.4.8]{CLS2011}. Since the fan of $\overline{R}_{x,y}$ is the normal fan of $Q_{x,y}$, (1) and (2) are equivalent. In particular, (1), equivalently, (2), implies (3) and (4).

The local intersection cohomology polynomial at a toric fixed point is the toric $g$-polynomial of a section of the corresponding full-dimensional cone. For a $d$-polytope $P$, the linear coefficient of the toric $g$-polynomial is $f_0(P)-(d+1)$ (see, for example, \cite[(5), p.~1153]{Braden2006}). Since a toric $g$-polynomials cannot have internal zeros, it follows that this local polynomial is $1$ if and only if the cone is simplicial. Hence (3) says that $q_x$ is a simple vertex. 
Its vertex figure $\Pi_{x,y}$ is then a simplex, so (3) implies (8). The same argument at $q_y$, 
whose vertex figure has complete face lattice anti-isomorphic to $[x,y]$, shows that (4) implies (9).

By Corollary~\ref{cor:geometric-realization}, (8) and (9) are equivalent. 

If $[x,y]\cong B_n$, then its interval poset is the face lattice of the $n$-cube: a pair of subsets $A\subseteq B\subseteq[n]$ specifies the cube face whose coordinates in $A$ are $1$, those outside $B$ are $0$, and those in $B\setminus A$ are free. This combined with Proposition~\ref{prop:toric-Richardson} proves (9) implies (6), which implies (2). Conversely, a vertex figure of a cube is a simplex, so (6) implies (8).

By Theorem~\ref{thm:Richardson-IH}, $f_{[x,x],[x,y]}(q)=P_{x,y}(q)$. Thus (3) and (5) are equivalent. This establishes all the equivalences except (7).

Note that Stanley's toric $h$-polynomial $h(P,q)$ depends only on the face lattice and by the proof of Theorem~\ref{thm:Richardson-IH}, we have $H_{x,y}(q)=h(Q_{x,y}^*,q)$ in our convention, where $Q_{x,y}^*$ is the polar dual of $Q_{x,y}$. Hence $h(C_n^*,q)=(1+q)^n$ for an $n$-cube shows that (6) implies (7). 

Conversely, suppose (7) holds and $n\geq2$. For a projective toric $n$-fold with moment polytope $Q$, the coefficient of $q$ in its intersection cohomology Poincar\'e polynomial is $f_{n-1}(Q)-n$, where $f_{n-1}(Q)$ is the number of facets of $Q$. Equivalently, this is the linear coefficient of the toric $h$-polynomial  of the polar polytope (see \cite{Stanley1987}). 
By Proposition~\ref{prop:toric-Richardson}, the facets in $\operatorname{Int}([x,y])$ are $[x,c]$ and $[a,y]$, with $c\in\Co(x,y)$ and $a\in\At(x,y)$. Thus
\[
 \#\At(x,y)+\#\Co(x,y)=2n.
\]
These are the numbers of vertices and facets, respectively, of the $(n-1)$-polytope $\Pi_{x,y}$. 
Each is at least $n$. Consequently $\Pi_{x,y}$ has exactly $n$ vertices, so it is a simplex. The case $n=1$ is immediate. 

Finally, rational smoothness identifies ordinary and intersection cohomology, which proves the last assertion.
\end{proof}

For Richardson varieties in type $A$ admitting a dense orbit of the ambient maximal torus, the equivalence of conditions (2), (6), and (9) was shown in \cite[Proposition~5.6 and Theorem~5.7]{LeeMasudaPark2021}. Theorem~\ref{thm:rational-smoothness} extends these equivalences to lattice Bruhat intervals in arbitrary finite Weyl groups, allowing torus actions beyond the ambient maximal torus, and relates them to Kazhdan--Lusztig polynomials and intersection cohomology.

An infinite family of rationally smooth toric $\overline{R}_{x,y}$ comes from the ``large Bruhat hypercubes'' recently found by Ellenberg--Libedinsky--Plaza--Simental--Williamson \cite{EllenbergETAL}. For each $n\geq2$, they constructed a family of permutations $x_n,\leq y_n \in S_{2^n}$ and showed that the Bruhat interval $\left[x_n, y_n\right]$ is Boolean, with $\ell(x_n,y_n)=n 2^{n-1}$. Thus $\overline{R}_{x_n, y_n}$ is toric for some torus $T_n$ and its moment polytope is combinatorially a cube.

\begin{remark}\label{rmk:convention}
One needs to distinguish the normal-fan convention for a moment polytope from the face-fan convention (see also \cite[Remark 4.21]{Proudfoot2018}). Our convention is that the toric $g$-vector and the toric $h$-vector extend the simplicial $g$-vector and the simplicial $h$-vector. For example, if $C_n$ is an $n$-cube, then $h(C_n^*,q)=(1+q)^n$, which computes the intersection cohomology Poincar\'e polynomial of the toric variety associated with the normal fan of $C_n$.
\end{remark}

\section{Consequences for Kazhdan--Lusztig polynomials} \label{sec:KL}
Recall the language of Kazhdan--Lusztig--Stanley theory in Section~\ref{sub:KLS}. When $[x,y]$ is a lattice, the classical Kazhdan--Lusztig polynomial $P_{x,y}(q)$ is the right KLS polynomial associated with the Eulerian kernel on $[x,y]$, since every subinterval of $[x,y]$ is a lattice and the classical $R$-polynomial kernel equals the Eulerian kernel $\epsilon$ (see Theorem~\ref{thm:lattice-characterization}). By the geometric interpretation of the toric $g$-polynomial for rational polytopes \cite{DenefLoeser1991,Fieseler1991}, we have a new interpretation for the KL polynomial when the Bruhat interval $[x,y]$ in a finite Weyl group is a lattice.

\begin{corollary}\label{cor:local-toric-KL}
Let $[x,y]$ be a lattice Bruhat interval in a finite Weyl group. For the projective toric variety $X_{x,y}$ in Corollary~\ref{cor:geometric-realization}, one has
\[
 P_{w,y}(q)=\sum_{i\geq0}\dim\mathrm{IH}^{2i}_p(X_{x,y})q^i \qquad(x<w\leq y,\ p\in O_w).
\]
In particular, all odd local intersection cohomology groups of $X_{x,y}$ vanish.
\end{corollary}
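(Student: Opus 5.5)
The plan is to combine the orbit structure of $X_{x,y}$ from Corollary~\ref{cor:geometric-realization} with the standard local intersection cohomology computation for projective toric varieties, and then to identify the resulting combinatorial KLS polynomial with $P_{w,y}$.

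\textbf{Step 1 (toric side).} By Corollary~\ref{cor:geometric-realization}, $X_{x,y}$ is the projective toric variety with moment polytope $\Pi_{x,y}$. Its orbits $O_w$, $x<w\le y$, correspond to the faces of $\Pi_{x,y}$ indexed by $w$, and the complete face lattice of $\Pi_{x,y}$ is $[x,y]$. The face of $\Pi_{x,y}$ attached to $O_w$ has dimension $\ell(x,w)-1$. For a point $p\in O_w$, the local intersection cohomology of $X_{x,y}$ at $p$ is computed by the Denef--Loeser / Fieseler theorem (in the form of \cite[Corollary~4.20]{Proudfoot2018}, already used in the proof of Theorem~\ref{thm:Richardson-IH}). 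It is the right KLS polynomial of the Eulerian kernel on the face lattice, taken on the interval from the face of $O_w$ to the top face $\Pi_{x,y}$. The same theorem gives the vanishing of odd-degree local intersection cohomology.

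\textbf{Step 2 (identify the interval and kernel).} Transporting through the isomorphism $\mathcal F(\Pi_{x,y})\cong[x,y]$, the interval of faces from the face of $O_w$ up to $\Pi_{x,y}$ becomes the Bruhat interval $[w,y]$. The rank differences agree, since both are $\ell(w,z)$. So the local Poincar\'e polynomial at $p$ is the right KLS function $f_{w,y}$ for the Eulerian kernel $\epsilon_{u,v}=(q-1)^{\ell(u,v)}$ on $[x,y]$, evaluated on the pair $(w,y)$.

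\textbf{Step 3 (compare with KL).} Every subinterval $[u,v]\subseteq[x,y]$ is again a lattice. By Theorem~\ref{thm:lattice-characterization}(4), $R_{u,v}(q)=(q-1)^{\ell(u,v)}$, so the $R$-polynomial kernel restricted to $[x,y]$ equals $\epsilon$. By uniqueness of right KLS functions, $f_{w,y}=P_{w,y}$, where the KL recursion $\overline P=RP$ (right convention, as set up in Section~\ref{sub:KLS}) is used for the restriction to $[x,y]$. This restriction is legitimate because the KL recursion for $P_{w,y}$ involves only elements of $[w,y]\subseteq[x,y]$.

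\textbf{Main obstacle.} The delicate point is matching conventions, namely that the toric theorem produces the \emph{right} KLS function on the face poset in the orientation in which $[x,y]$ appears. The orbit $O_w$ lies in $\overline{O_z}$ iff $w\le z$, so we need the local IH at a smaller orbit inside the closure of the largest one. This is the interval $[\text{face}(O_w),\text{top}]$ in the face lattice of the moment polytope, for which Proudfoot's corollary gives the right KLS polynomial in the normal-fan convention (cf.\ Remark~\ref{rmk:convention}). The same convention yielded $f_{[a,b],[c,d]}$ in Theorem~\ref{thm:Richardson-IH}. Following that theorem's convention carefully should settle the orientation.
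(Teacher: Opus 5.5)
Your proposal is correct and follows essentially the paper's own argument. The paper likewise combines the orbit--face correspondence of Corollary~\ref{cor:geometric-realization}, the Denef--Loeser/Fieseler description of toric local intersection cohomology as right Eulerian KLS polynomials (in the same convention as Theorem~\ref{thm:Richardson-IH}), and the equality of the $R$-polynomial kernel with the Eulerian kernel on lattice intervals from Theorem~\ref{thm:lattice-characterization}. The orientation you flag as the main obstacle is the right one: the face figure of $\Pi_{x,y}$ at the face indexed by $w$ has complete face lattice $[w,y]$, and the local polynomial along $O_w$ is $g$ of its polar, which equals $f_{w,y}$.
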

The above restriction $w>x$ is essential: the bottom element $x$ labels the empty set, not a torus orbit of $X_{x,y}$. On the other hand, the polynomial $P_{x,y}(q)$ computes the local intersection cohomology Poincar\'e polynomial of the affine toric variety associated with the dual $C(\Pi_{x,y})^{\vee}$ of the cone over $\Pi_{x,y}$ at the unique fixed point in our convention, cf. the Remark at the end of Section~\ref{sec:geometry} (see also \cite[Example 3.2]{BradenProudfoot2026}).

We now return to arbitrary Coxeter groups. First we specify the toric polynomials explicitly.  For a $d$-dimensional polytope $P$, define recursively
\begin{equation*}
 \begin{split}
 h(P,q)&=\sum_{F\subsetneq P}g(F,q)(q-1)^{d-1-\dim F},\\
g(P,q)&=\operatorname{trunc}_{\leq\lfloor d/2\rfloor}\bigl((1-q)h(P,q)\bigr),
 \end{split}
\end{equation*}
where the sum is over all proper faces $F$ of $P$, including the empty face. The induction starts with $g(\varnothing,q)=h(\varnothing,q)=1$. Here truncation means retaining only powers at most the indicated bound. This is Stanley's convention \cite{Stanley1987} (see also \cite[(4), p.~1152]{Braden2006}). 

On the complete face lattice of $P$, by expanding the defining property of the right KLS function from the convolution product of the incidence algebra in Section~\ref{sub:KLS}, we have
\begin{equation*}
 f_{\varnothing,P}(q)=g(P^*;q),
\end{equation*}
where $f_{\varnothing,P}(q)$ is the right KLS polynomial associated with the Eulerian kernel $\epsilon$ and $P^*$ denotes the polar dual of $P$.

We have the following classical observation.
\begin{proposition}\label{prop:kernel-coincidence}
Let $L=[x,y]$ be a lattice Bruhat interval in a Coxeter group. For every $x\leq a\leq b\leq y$, the right KLS polynomial $f_{a,b}(q)$ of $[a,b]$ associated to the Eulerian kernel equals the Kazhdan--Lusztig polynomial $P_{a,b}(q)$. If $a<b$ and $D_{a,b}$ is any polytope with complete face lattice $[a,b]$, then
\begin{equation*}
 P_{a,b}(q)=g(D_{a,b}^*;q).
\end{equation*}
Consequently, these Kazhdan--Lusztig polynomials depend only on the isomorphism types of their ranked intervals.
\end{proposition}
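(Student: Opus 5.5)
The plan is to show that the Kazhdan--Lusztig $R$-polynomial kernel and the Eulerian kernel agree on all of $L=[x,y]$. The first claim then follows from uniqueness of right KLS functions. The polytopal statement comes from the identification $f_{\varnothing,P}(q)=g(P^*;q)$ recorded just before the proposition.

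\textbf{Step 1: the kernels agree.} Fix $x\le a\le b\le y$. The interval $[a,b]$ is a subinterval of the lattice $[x,y]$, hence is itself a lattice. Indeed, for $u,v\in[a,b]$ the join in $[x,y]$ lies below $b$ and above $a$, so it is the join in $[a,b]$; the same holds for meets. By Theorem~\ref{thm:lattice-characterization}, (1)$\Rightarrow$(4) applied to $[a,b]$, we get $R_{a,b}(q)=(q-1)^{\ell(a,b)}=\epsilon_{a,b}(q)$. So $R=\epsilon$ as elements of the incidence algebra $I(L)$, using the rank function $\ell(x,\cdot)$.

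\textbf{Step 2: uniqueness identifies the KLS functions.} The classical $P$ restricted to $L$ satisfies $\overline P=RP$ on $L$. This is the standard KL recursion $q^{\ell(a,b)}P_{a,b}(q^{-1})=\sum_{a\le z\le b}R_{a,z}(q)P_{z,b}(q)$, whose sum involves only elements of $[a,b]\subseteq L$. The degree bounds and the normalization $P_{a,a}=1$ hold as well. Uniqueness of the right KLS function for the kernel $R=\epsilon$ on $L$ (Section~\ref{sub:KLS}) then gives $f_{a,b}=P_{a,b}$. One point needs checking: computing the KLS function on the subposet $[a,b]$ versus on all of $L$ gives the same $f_{a,b}$. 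This holds because the convolution defining $(\kappa f)_{a,b}$ only involves elements of $[a,b]$, so restriction to intervals commutes with the defining equations and with uniqueness.

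\textbf{Step 3: the polytope formula.} Let $a<b$ and let $D_{a,b}$ be a polytope whose complete face lattice is isomorphic to $[a,b]$, with $a\mapsto\varnothing$ and $b\mapsto D_{a,b}$. Such polytopes exist by Theorem~\ref{thm:main} applied to the lattice interval $[a,b]$. The isomorphism preserves rank, since face dimension plus one corresponds to $\ell(a,\cdot)$. Hence the Eulerian kernels correspond, and so do their right KLS functions. Therefore $P_{a,b}(q)=f_{a,b}(q)=f_{\varnothing,D_{a,b}}(q)=g(D_{a,b}^*;q)$, by the displayed identity preceding the proposition.

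\textbf{Step 4: combinatorial invariance.} $f_{a,b}$ is determined by the Eulerian kernel on the ranked poset $[a,b]$, and that kernel depends only on ranks. Hence $P_{a,b}$ depends only on the isomorphism type of $[a,b]$.

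\textbf{Main obstacle.} The only delicate points are bookkeeping: the restriction argument in Step~2, and making sure the left/right convention matches the polar-dual convention in Step~3. The substantive input is just Brenti's criterion via Theorem~\ref{thm:lattice-characterization}.
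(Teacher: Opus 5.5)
Your proposal is correct and follows the paper's proof essentially step for step: subintervals of $L$ are lattices, Brenti's criterion (Theorem~\ref{thm:lattice-characterization}) gives $R_{a,b}(q)=(q-1)^{\ell(a,b)}$, so the two kernels coincide on $L$; uniqueness of the right KLS function then gives $f_{a,b}=P_{a,b}$, and the identity $f_{\varnothing,P}(q)=g(P^*;q)$ yields the polytope formula. Your extra bookkeeping (that restriction to subintervals commutes with the defining equations, and that ranks match under the face-lattice isomorphism) is sound, and your appeal to Theorem~\ref{thm:main} for the existence of $D_{a,b}$ is harmless but not needed, since the statement quantifies over any such polytope.
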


\begin{proof}
Every subinterval of a lattice is a lattice. By Theorem~\ref{thm:lattice-characterization}, its $R$-polynomial is $(q-1)^{\ell(a,b)}$. The two kernels therefore agree on all comparable pairs in $L$. Uniqueness of the right KLS function gives $f_{a,b}(q)=P_{a,b}(q)$. If $a<b$ and $D_{a,b}$ is any polytope with complete face lattice $[a,b]$, then we have $f_{a,b}(q)=g(D_{a,b}^*;q)$ as discussed above. This proves the remaining assertions.
\end{proof}

This proposition separates two facts. The equality of these Kazhdan--Lusztig polynomials with the Eulerian KLS polynomials follows from the lattice condition and the $R$-polynomial characterization. Polytopality in our Theorem~\ref{thm:main} supplies the additional convex geometry needed to apply theorems about toric $g$-vectors. An arbitrary Eulerian poset has a KLS function, but this function need not satisfy the numerical restrictions on toric $g$-polynomials of convex polytopes discussed below.

Now we recall the precise polytope results that we will use. If $P$ is a convex $d$-polytope, then its toric $g$-numbers (i.e., coefficients of the toric $g$-polynomial) are nonnegative, and
\begin{align*}
 g(P;q)&\geq g(F;q)g(P/F;q)&&\text{ for any face $F$ of }P,\\
 g_k(P)=0&\ \Longrightarrow\ g_{k+1}(P)=0 && \text{ for $k\geq1$ },\\
 g_i(P)&\leq\binom{f_0(P)-d+i-2}{i}&&\text{ for $1\leq i\leq\lfloor d/2\rfloor$ }.
\end{align*}
Here $P/F$ is a face figure, with complete face lattice $[F,P]$, $g_i(P)$ is the coefficient of $q^i$ in $g(P;q)$ and $f_0(P)$ is the number of vertices of $P$. The first monotonicity inequality was conjectured by Kalai and was proved for rational polytopes by Braden--MacPherson \cite{BradenMacPherson1999}, the second implication is known as Kalai's vanishing, and the last inequality is Bayer's generalized upper bound theorem \cite{Bayer1998}. The combinatorial intersection cohomology of fans and Karu's hard Lefschetz theorem \cite{Karu2004} remove the rationality hypothesis. The statements, including Kalai's vanishing result, are recorded in \cite[Theorems~1.2--1.4]{Braden2006}.

We are now ready to establish the consequences of our main theorem, which provide additional information for the Kazhdan--Lusztig polynomials for lattice Bruhat intervals.

\begin{proof}[Proof of Theorem~\ref{thm:KL-applications}]
Let $P=L_{x,y}^{*}$, so that $\dim P=n-1$ and
\[
 P_{x,y}(q)=g(P;q).
\]
Then Kalai’s vanishing theorem proves (1).

For $x<z<y$, the face of $P$ corresponding to $z$ has complete face lattice anti-isomorphic to $[z,y]$, while its face figure in $P$ has complete face lattice anti-isomorphic to $[x,z]$. Their toric $g$-polynomials are $P_{z,y}$ and $P_{x,z}$ by Proposition~\ref{prop:kernel-coincidence}. Thus Kalai's monotonicity gives (2).

Note that the vertices of $P$ correspond to the coatoms of $[x,y]$. Substitution of $f_0(P)=m$ and $d=n-1$ into the generalized upper bound theorem proves (3).

Note that by Proposition~\ref{prop:kernel-coincidence}, the linear coefficient of the toric $g$-polynomial (see \cite[(5), p.~1153]{Braden2006}) gives the linear coefficient
\begin{equation*}
 [q]P_{x,y}(q)=f_0(P)-n=m-n.
\end{equation*}
Consequently $P_{x,y}(q)=1$ forces $P$ to have exactly $n$ vertices, and hence to be a simplex. Its face lattice, and therefore $[x,y]$, is Boolean. The converse follows from the fact that the toric $g$-polynomial of a simplex is $1$ (see \cite[p.~1153]{Braden2006}). This proves (4).
\end{proof}

\subsection*{Acknowledgment} The authors would like to thank Soyeon Kim for the wonderful talk in the CART seminar on unexpected Richardson varieties, which started this project. Both authors are partially supported by NSFC Grant no. 12471309.

\subsection*{Declaration of AI usage} AI tools, including GPT-5.6 Sol and GPT-6 Astra, were extensively used in the technical details of Theorem 1.1 through numerous rounds of interaction with the authors, while the remaining mathematical content was developed by the authors. The paper is written by the authors with limited AI-assisted language editing.

\bibliographystyle{plain}
\bibliography{template}
\end{document}